\definecolor{thmcol}{RGB}{65, 102, 245}
\definecolor{citecol}{RGB}{65, 102, 245}
\definecolor{linkcol}{RGB}{65, 102, 245}
\definecolor{urlcol}{RGB}{65, 102, 245}
\theoremstyle{plain}
\newtheorem{lemma}{Lemma}[section]
\newtheorem{proposition}[lemma]{Proposition}
\newtheorem{thm}[lemma]{Theorem}
\newtheorem*{thm1}{Theorem \hypersetup{linkcolor=black}\ref{principal}}
\newtheorem*{prop1}{Proposition \hypersetup{linkcolor=black}\ref{weak}}
\newtheorem*{cor1}{Theorem \hypersetup{linkcolor=black}\ref{coro}}
\newtheorem*{cor2}{Corollary \hypersetup{linkcolor=black} \ref{corolot}}
\newtheorem{corollary}[lemma]{Corollary}
\theoremstyle{remark}
\newtheorem{remark}[lemma]{Remark}
\theoremstyle{definition}
\newtheorem{definition}[lemma]{Definition}
\newtheorem{ej}[lemma]{Example}
\def\k{\mathcal{K}}
\def\Z{\mathbb{Z}}
\def\pe{\mathcal{P}}
\def\kp{\k_{\mathcal{P}}}
\begin{document}

\title[Weakly $G$-slim complexes  and the non-positive immersion property]{Weakly $G$-slim complexes and the non-positive immersion property for generalized Wirtinger presentations}
\author[A.N. Barreto and E.G. Minian]{Agustín Nicolás Barreto and El\'\i as Gabriel Minian}

\address{Departamento  de Matem\'atica-IMAS (CONICET)\\
 FCEyN, Universidad de Buenos Aires\\ Buenos
Aires, Argentina.}

\email{abarreto@dm.uba.ar}
\email{gminian@dm.uba.ar}

\thanks{Researchers of CONICET. Partially supported by grants PICT 2019-2338 and UBACYT 20020190100099BA}

\begin{abstract}
 We investigate weakly $G$-slim complexes, a more flexible variant of Helfer and Wise's slim complexes, which can be defined on any regular $G$-covering. We prove that if a $2$-complex  $X$ associated to a group presentation is weakly $H$-slim for some regular $H$-covering, and $\pi_1X$ is left-orderable, then $X$ is slim and, in particular, it has non-positive immersions. We apply this result to study conditions on generalized Wirtinger presentations that guarantee the non-positive immersion property for their associated $2$-complexes. This class of presentations includes classical Wirtinger presentations of (perhaps high-dimensional) knots in spheres, some classes of Adian presentations and LOTs presentations. On the one hand, our results provide a wide variety of examples of presentation complexes with the non-positive immersion property via a condition that can be checked with a simple algorithm. On the other hand, as an application of these methods, we derive an extension of a result by Wise on Adian presentations and prove a stronger formulation of a result of Howie on LOT presentations. 
\end{abstract}

\subjclass[2020]{20F05, 20F65, 57M05, 57M07, 57M10.}

\keywords{Non-positive immersions, locally indicable groups, labelled oriented trees.}

\maketitle

\section{Introduction}
An immersion is a map between $2$-complexes that is locally injective. A $2$-complex $X$ has non-positive immersions if for any immersion $Y \looparrowright X$ with $Y$ compact and connected, either $\chi(Y) \leq 0$ or $Y$ is contractible. This notion is a topological variant of non-positive curvature and was introduced by Wise in \cite{wi0,wi} (see also \cite{hewi,wi2}). Note that in \cite{wi2}, this property is referred to as \textit{contracting} non-positive immersions. Wise showed that the fundamental group of a $2$-complex with non-positive immersions is locally indicable \cite{wi2}. Recall that a group $G$ is locally indicable if every finitely generated subgroup of $G$ admits a non-trivial homomorphism to the integers. Similarly as in the classical geometric context of non-positive curvature, the non-positive immersion property is related to asphericity: if $X$ is a $2$-complex with non-positive immersions and $\pi_1X$ is non-trivial, then $X$ is aspherical (see \cite[Corollary 2.8]{jali}). It is conjectured that if $X$ has non-positive immersions then $\pi_1X$ is coherent, which means that every finitely generated subgroup is finitely presented \cite[Conjecture 12.11]{wi1} (see also \cite{wi2}). In fact, this was part of Wise's original plan to prove Baumslag's conjecture on the coherence of one-relator groups \cite{wi0,wi1,wi2}). Helfer and Wise \cite{hewi}, and independently Louder and Wilton \cite{lw0}, proved that (the $2$-complexes associated to) torsion-free one-relator presentations have the non-positive immersion property. Wise \cite{wi2}, and independently Louder and Wilton \cite{lw}, showed that one-relator groups with torsion are coherent. In \cite{hs} Howie and Short extended the results of Wise, Louder and Wilton on coherence in one-relator groups with torsion to one-relator products of locally indicable groups, and exhibited an alternative proof of the non-positive immersion property for torsion-free one-relator groups. More recently Jaikin-Zapirain and Linton proved the coherence of all one-relator groups \cite{jali}, and showed that the fundamental group of any $2$-complex with non-positive immersions is homologically coherent. It is worth noting that it is unknown whether there exists a group which is homologically coherent but not coherent (see \cite{jali}).

 Other known examples of complexes with non-positive immersions are staggered $2$-complexes without proper powers \cite{hewi}, and complexes satisfying Sieradski's coloring test. In particular, the $2$-complexes associated to cycle-free Adian presentations have this property \cite{wi}. Also, any compact irreducible $3$-manifold with boundary has a two-dimensional spine with non-positive immersions \cite{wi2}. 
Although significant progress has been made, it remains challenging to algorithmically describe broad classes of group presentations whose standard 2-complexes have non-positive immersions. Wise observed that the proofs of the non-positive immersion property for specific 2-complexes are often ad hoc and asked whether there is an algorithm to detect this property \cite[Problem 1.9]{wi2} (see also \cite[Problem 12.2]{wi1}). In this direction, Wilton introduced the notion of maximal irreducible curvature $\rho_+(X)$, a new curvature invariant which is algorithmically computable. He showed that if $\rho_+(X)\leq 0$, then $X$ has non-positive immersions, and asked whether there exists an example of a $2$-complex $X$ with non-positive immersions such that $\rho_+(X)>0$. In \cite{blm} Blufstein and Minian provided such an example, which shows that these notions are not equivalent.

  In \cite{hewi} Helfer and Wise introduced the notion of a slim complex and proved that these $2$-complexes have non-positive immersions. A $2$-complex $X$ is slim if there is a $\pi_1$-invariant preorder on the set of $1$-cells of its universal cover $\tilde X$ satisfying certain properties on the edges of the boundaries of the $2$-cells of $\tilde X$ (see Definition \ref{slim} below).  In many situations it is more convenient to work with other regular coverings of $X$ different from the universal cover. This happens, for example, in the case of the complexes $\k_{\mathcal{P}}$ associated to generalized Wirtinger presentations $\pe$. A generalized Wirtinger presentation is a  group presentation $\pe= \langle a_1, \ldots, a_n \mid  r_1, \ldots, r_k \rangle$ with cyclically reduced relators such that $H_1(\k_{\mathcal{P}})$ is a nontrivial free abelian group of rank $n-k$. These presentations include classical Wirtinger presentations of (perhaps high-dimensional) knots in spheres, some classes of Adian presentations and LOTs presentations. In those cases it is more natural to define $\Z$-invariant preorders in infinite cyclic coverings, instead of working with the universal cover. In this paper we introduce the notion of weakly $G$-slim complex, extending the concept of slim complex to any regular covering. The definition of weakly $G$-slim complex also replaces one of the conditions of slim complexes by a more flexible condition that involves elementary reductions. We remark that the notion of reduction that we use is the one introduced by Howie in \cite{h1} (see also \cite{h2}), which is less restrictive than the one used by Helfer and Wise in \cite{hewi}, and is more suitable for the applications developed in Section \ref{applications}. The first result of this article asserts that if the $2$-complex  $X=\k_{\pe}$ associated to a presentation $\pe$ is weakly $H$-slim for some regular $H$-covering and $\pi_1X$ is left-orderable, then $X$ is slim. 

\begin{thm1} 
	Let $\mathcal{P}$ be a group presentation. Let $X = K_{\mathcal{P}}$ and let $\hat{X} \to X$ be an $H$-covering. Suppose  that $\pi_1X$ is left-orderable. If $X$ is weakly $H$-slim, then $X$ is slim. In particular, $X$ has the non-positive immersion property.
\end{thm1}

In our previous article \cite{bam1} we introduced an easy-to-check  condition on the relators, called weak concatenability (see Definition \ref{weakconca} below). We proved that, for generalized Wirtinger presentations, weak concatenability implies local indicability of the presented group \cite[Theorem 2.18]{bam1}. With a similar proof, one can show that it also implies weak $\Z$-slimness. 

\begin{prop1} 
Let $\mathcal{P} = \langle a_1, \ldots, a_n \mid  r_1, \ldots, r_k \rangle$ be a presentation of a group $G$, with cyclically reduced relators and such that $H_1(G)$ is a nontrivial free abelian group of rank $n-k$. Let $\varphi : G \to \mathbb{Z}$ be a surjective homomorphism. If $\{r_1, \ldots , r_k\}$ is weakly concatenable with respect to $\varphi$, then $\mathcal{K}_{\mathcal{P}}$ is weakly $\mathbb{Z}$-slim.
\end{prop1}

 As a consequence of Proposition \ref{weak} and Theorem \ref{principal}, we obtain the second result of the paper. 

\begin{cor1}
   Let $\mathcal{P} = \langle a_1, \ldots, a_n \mid  r_1, \ldots, r_k \rangle$ be a group presentation with cyclically reduced relators of a group $G$ with $H_1(G)$ nontrivial free abelian of rank $n-k$. Let $\varphi : G \to \mathbb{Z}$ be a surjective homomorphism. If $\{r_1, \ldots , r_k\}$ is weakly concatenable with respect to $\varphi$, then $\k_{\mathcal{P}}$ has the non-positive immersion property.
\end{cor1}

On the one hand, Theorem \ref{coro} provides a wide variety of examples of presentation complexes with non-positive immersions via a condition that can be checked with a simple algorithm. On the other hand, it can be used to investigate certain classes of Adian presentations, extending a result by Wise \cite[Section 11.2]{wi} (see Theorem \ref{adian} below). In particular we apply this result to study LOT presentations. These presentations are related to ribbon disk complements and Whitehead's Asphericity Question (see \cite{h15,h2} for more details). It is not known whether LOT groups are locally indicable (in fact, it is not known whether they are, in general, torsion-free). Local indicability of LOT groups would imply the asphericity of their associated complexes, since they have the homology of a circle (see \cite{h2}). Wise conjectured that the associated complexes of LOT presentations satisfy the non-positive immersion property (\cite[Conjecture 12.21]{wi1}). Our last result goes in that direction. To each LOT $\Gamma$ one can associate
two graphs: $I(\Gamma)$ and $T(\Gamma)$ (see Section \ref{secadian} below). Howie proved that, if $I(\Gamma)$ or $T(\Gamma)$ has no cycles, then the group $G(\Gamma)$ presented by $P(\Gamma)$ is locally indicable \cite{h2}. Applying Theorem \ref{coro}, we prove the following stronger result. 
\begin{cor2}
	Let $\Gamma$ be a reduced LOF. If $I(\Gamma)$ or $T(\Gamma)$ has no cycles, then the associated $2$-complex $\k_{\mathcal{P}_\Gamma}$ has the non-positive immersion property. 
\end{cor2}

\section{$G$-coverings and weakly $G$-slim complexes}

A $2$-complex is a two-dimensional CW-complex. We will work in the category of combinatorial $2$-complexes and combinatorial maps, which send (open) $n$-cells homeomorphically to $n$-cells. Let $G$ be a group. By a $G$-covering we mean a regular covering $\hat X\to X$ of a $2$-complex $X$ with covering (deck) transformation group isomorphic to $G$. Given a group presentation $\mathcal{P}$, let $X = K_{\mathcal{P}}$ be the associated standard $2$-complex, with one $0$-cell, one $1$-cell for each generator, and one $2$-cell for each relator of $\mathcal{P}$. A surjective homomorphism $\varphi:\pi_1X\to G$ determines a $G$-covering $\rho: \hat{X} \to X$, whose fundamental group is isomorphic to $\ker \varphi$ (via $\rho$). We index the $0$-cells of $\hat X$ with the elements of $G$ as follows. We choose a base point (a $0$-cell), which we denote by $1$. For every $g\in G$, we choose a loop $\gamma$ such that $\varphi([\gamma]) = g$, and we let $g$ be the endpoint of the lift $\hat \gamma$ of $\gamma$ that starts at the vertex $1$. Note that $g = g \cdot 1$, and, in general, $g \cdot h = gh$  with the usual action of $G$ on $\hat {X}$ by covering transformations. The $1$-cells of $\hat X$ are indexed similarly:  the lift of every generator $a$ of $\mathcal{P}$ (=$1$-cell of $X$) starting at the vertex $g$ is denoted by $\hat{a}_{g}$. Note that $g \cdot \hat{a}_h = \hat{a}_{gh}$. 
When $G$ is the group presented by $\pe$ (which we denote by $G(\mathcal{P})$), the $G$-covering of $X$ (the universal cover) is the Cayley complex associated to $\pe$, and its $1$-skeleton is the Cayley graph of $\pi_1X$ with respect to the generator set of $\mathcal{P}$.

It is not difficult to see that, if $\rho: \Tilde{X} \to X$ is the universal cover of $X=K_{\pe}$ and  $\rho_1: \hat{X} \to X$ is the $H$-covering associated to a surjective homomorphism $\varphi : \pi_1X \to H$, the unique (covering) map $\rho_2: \Tilde{X} \to \hat{X}$ such that  $\rho_1\rho_2=\rho$ and $\rho_2(1_{G(P)})=1_H$, satisfies $\rho_2(g) = \varphi(g)$ and $\rho_2(\Tilde{a}_g)=\hat a_{\varphi(g)}$ for each $g \in G(P)$.

The notion of slim $2$-complex was introduced by Helfer and Wise in \cite{hewi}. They proved that slim complexes have non-positive immersions \cite[Corollary 5.6]{hewi}. Slimness is defined via a preorder on the edges of the universal covering of $X$.

\begin{definition} \label{slim}
A combinatorial $2$-complex $X$ is called slim if its universal cover $\tilde X$ satisfies the following conditions.
\begin{enumerate}
    \item There is a $\pi_1 X$-invariant preorder on the set $E(\Tilde{X})$ of edges of $\tilde X$.
    \item For each $2$-cell $\Tilde{R}$ of $\Tilde{X}$,   the set of edges of its boundary $E(\partial \Tilde{R})$ has a unique strictly minimal $1$-cell $\min(\Tilde{R})$. Moreover, the attaching path of $\Tilde{R}$ traverses $\min(\Tilde{R})$ exactly once.
    \item If $\Tilde{R}_1 \neq \Tilde{R}_2$ are $2$-cells of $\Tilde{X}$ and $\min(\Tilde{R}_1)$ appears on $\partial \Tilde{R}_2$, then $\min(\Tilde{R}_1) > \min(\Tilde{R}_2)$.
\end{enumerate}
\end{definition}

We will use a somewhat weaker notion of slimness. This notion can be defined for any regular covering of $X$, and replaces condition $(2)$ of above by a weaker condition that involves the concept of (elementary) reduction introduced by Howie 
\cite{h1, h2, hs} (see also \cite{hewi, li}). The main result of this section asserts that if $X=K_{\pe}$ is weakly $H$-slim for some regular $H$-covering and $\pi_1X$ is left-orderable, then $X$ is slim (and therefore it has non-positive immersions). Recall that a group is left-orderable if it admits a total order that is invariant under left multiplication. We recall first some basic facts about reductions.

\begin{definition}
    A pair of combinatorial $2$-complexes $(X,Y)$ is said to be an elementary reduction if $X$ is obtained from $Y$ by attaching a $1$-cell $e$ and at most one $2$-cell $R$, and the attaching path of $R$ is not homotopic in $Y \cup \{e\}$ to a path in $Y$. We will also refer to the pair $(X,Y)$ as an $e$-elementary reduction (if only a $1$-cell $e$ is attached) or an $(R,e)$-elementary reduction (if also a $2$-cell $R$ is attached). We say that an $(R,e)$-elementary reduction is simple (or without proper powers) if the attaching map of $R$ is not a proper power in $\pi_1(Y\cup \{e\})$.
\end{definition}

A subcomplex $Y$ of a $2$-complex $X$ will be called a (simple) reduction of $X$ if $Y$ can be obtained from $X$ after countably many (simple) elementary reductions. A $2$-complex $X$ is reducible if every finite subcomplex $Y\subseteq X$ not contained in $X^{(0)}$ has an elementary reduction.

\begin{remark}
Note that the notion of reduction that we use is Howie's original one \cite{h1,h2}, which is more general than the one used by Helfer and Wise in \cite{hewi}. For instance, the infinite cyclic coverings associated to the generalized Wirtinger presentations of Proposition \ref{weak} and Theorem \ref{coro} are reducible in the sense of Howie, but not necessarily reducible in the sense of Helfer and Wise.
\end{remark}

\begin{definition} 
Let $X$ be a combinatorial $2$-complex and let $G$ be a group. We say that $X$ is weakly $G$-slim if there exists a $G$-covering $Z\to X$ which satisfies the following conditions.
\begin{enumerate}
    \item There is a $G$-invariant preorder on the set $E(Z)$ of edges of $Z$.
    \item For each $2$-cell $R$ of $Z$, the set $E(\partial R)$ has a unique strictly minimal $1$-cell $\min(R)$. Moreover, there exists a simple $(R,\min(R))$-elementary reduction $(W,T)$, where $W\subseteq Z$ is a simple reduction.
    \item If $R_1 \neq R_2$ are $2$-cells of $Z$ and $\min(R_1)$ appears in $\partial R_2$, then $\min(R_1) > \min(R_2)$.
\end{enumerate}
\end{definition}

We will need the following  two lemmas. The first one is due to Howie \cite[Corollary 3.4]{h1} (see also \cite[Lemma 7.2]{hewi}).

\begin{lemma} \label{essential}
	Let $(X,Y)$ be an $(R,e)$-elementary reduction with $\pi_1Y$ locally indicable. Suppose that $\partial_pR= P_1P_2$, with $P_i$ a closed path in $X$ that traverses $e$. Then each $P_i$ is essential in $X$.
\end{lemma}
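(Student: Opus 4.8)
The plan is to recast the statement as a Freiheitssatz-type assertion about one-relator products of locally indicable groups and then run Howie's tower argument. First I would set up the algebra. Writing $G=\pi_1(Y\cup\{e\})$, attaching the $1$-cell $e$ to the (connected) complex $Y$ realizes $G$ as a free product $G\cong \pi_1Y * \langle t\rangle$, where $t$ is the element carried by $e$ (if $e$ joins two components one gets instead a free product of the two vertex groups, and the argument below is identical). Since $\pi_1Y$ is locally indicable, so is $G$, free products of locally indicable groups being locally indicable. The hypothesis that $(X,Y)$ is a genuine $(R,e)$-reduction means that $w:=\partial_p R$ is not homotopic into $Y$, i.e. its free-product normal form genuinely involves $t$, and $\pi_1X=\overline{G}:=G/\langle\langle w\rangle\rangle$ is the corresponding one-relator product.

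Next I would make the elementary reduction. Because $R$ is glued along $\partial_p R=P_1P_2$, this loop is null-homotopic in $X$, so $[P_1][P_2]=1$ in $\overline{G}$ and hence $[P_2]=[P_1]^{-1}$; thus $P_1$ is essential if and only if $P_2$ is, and it suffices to prove $[P_1]\neq 1$. Since $P_1$ traverses $e$, its normal form involves $t$, so $[P_1]\neq 1$ already in $G$, and the whole content is that $P_1$ persists in the quotient. This is exactly a generalized Weinbaum statement: a proper cyclic subword of the relator that meets the free factor $\langle t\rangle$ cannot become trivial in the one-relator product. One checks that abelian invariants do not suffice: the homomorphism $G\to\Z$ counting occurrences of $t$ descends to $\overline{G}$ only when the $t$-exponent sum of $w$ vanishes, and even then it detects $[P_1]$ only when the $t$-exponent sum of $P_1$ is nonzero, which need not hold (for instance $P_1$ may contain cancelling $t$ and $t^{-1}$).

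For the general case I would argue by contradiction using towers, following \cite[Corollary 3.4]{h1}. Assuming $[P_1]=1$, I fill the loop $P_1$ by a map $f\colon D\to X$ of a disk and lift $f$ through a maximal tower $D\to X_n\to\cdots\to X_0=X$, a composition of coverings and subcomplex inclusions. At the top the induced map on fundamental groups is surjective and $X_n$ is built only from lifts of $Y$, $e$ and $R$; local indicability of $\pi_1Y$ then supplies, on the relevant finitely generated subgroup, a nontrivial homomorphism to $\Z$ with which one controls the occurrences of the lifts of $e$ and excludes a reduced filling of a loop that genuinely traverses $e$. This contradiction gives $[P_1]\neq 1$, hence both $P_1$ and $P_2$ are essential. (Alternatively one can induct on the number of $t$-syllables of $w$ through the Magnus--Moldavanskii HNN and amalgam decompositions of $\overline{G}$, applying the Freiheitssatz for locally indicable factors at each stage.) The main obstacle is precisely this last step: constructing the maximal tower and, at its top, combining local indicability with the bookkeeping of the lifts of $e$ to rule out a reduced diagram for $P_1=1$. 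This is the technical heart of Howie's result, so in practice I would cite \cite[Corollary 3.4]{h1} (compare \cite[Lemma 7.2]{hewi}); everything preceding it is the formal reduction to the one-relator-product picture described above.
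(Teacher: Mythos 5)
Your proposal ultimately rests on citing Howie's Corollary 3.4 (compare Helfer--Wise, Lemma 7.2), which is exactly what the paper does: Lemma \ref{essential} is stated there as a known result of Howie, with no independent proof given. So in substance you and the paper take the same route; your free-product framing and tower sketch are an outline of Howie's own argument rather than an alternative proof, and as such the proposal is acceptable.

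One caution about your scaffolding, though. The claim ``since $P_1$ traverses $e$, its normal form involves $t$, so $[P_1]\neq 1$ already in $G$'' is not a valid inference. A closed path can traverse $e$ and still be trivial in $\pi_1(Y\cup\{e\})$: take $P_1=e\,\gamma\,e^{-1}$ with $\gamma$ a closed path in $Y$ that is null-homotopic in $Y$; this path traverses $e$ (twice) but represents $t\cdot 1\cdot t^{-1}=1$ in $\pi_1Y*\langle t\rangle$. Consequently, the passage from the topological hypothesis (``each $P_i$ traverses $e$'') to the group-theoretic Weinbaum-type hypothesis (``$[P_i]$ is a piece of the free-product normal form of $w$ meeting the factor $\langle t\rangle$'') is precisely the delicate point here: it is where reducedness of the relevant words must enter, and it is handled inside Howie's proof, not by the formal dictionary you describe. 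Since your final step is the citation of Howie's result in its topological form, this slip does not change the verdict, but the ``formal reduction to the one-relator-product picture'' is not as automatic as your write-up presents it.
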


The second one is an immediate consequence of \cite[Theorem 4.3]{h0} and \cite[Theorem 4.2]{h1} (see also \cite[Corollary 3.2]{li}). 

\begin{lemma}\label{injective}
	Let $Y$ be a simple reduction of $X$. Suppose that $\pi_1(Y,y)$ is locally indicable for each base point $y\in Y$. Then $Y \hookrightarrow X$ is $\pi_1$-injective on each component of $Y$.
\end{lemma}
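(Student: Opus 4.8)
The plan is to induct along the sequence of elementary reductions that expresses $X$ as built up from $Y$, feeding each individual step into Howie's relative Freiheitssatz. The hypotheses needed to apply that theorem at a given step are exactly what the \emph{simplicity} of the reduction guarantees (the attaching map of each $2$-cell is not a proper power) together with local indicability of the smaller complex; so the real content is to propagate local indicability \emph{up} the tower, starting from the hypothesis that $\pi_1(Y,y)$ is locally indicable, and to reduce the countable situation to a finite one. Throughout I would work one component of $Y$ at a time, mapping it into the component of $X$ that contains it, which is what the phrase ``on each component'' means.

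First I would isolate the single-step statement. Suppose $(W,T)$ is a simple elementary reduction with $T$ connected and $\pi_1 T$ locally indicable. If it is an $e$-elementary reduction (only a $1$-cell attached), then $\pi_1 T \to \pi_1 W$ is injective and $\pi_1 W$ is again locally indicable, since attaching a $1$-cell either adds a free $\Z$ factor or forms a free product, and free products of locally indicable groups are locally indicable. If it is a simple $(R,e)$-elementary reduction, then the non-proper-power condition lets me invoke Howie's theorem: $T\hookrightarrow W$ is $\pi_1$-injective by \cite[Theorem 4.2]{h1} (cf.\ \cite[Corollary 3.2]{li}), the essentiality of the relevant subpaths being controlled by Lemma~\ref{essential}, and $\pi_1 W$ stays locally indicable by \cite[Theorem 4.3]{h0}. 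Thus each simple elementary reduction simultaneously preserves $\pi_1$-injectivity into the current complex and the local indicability hypothesis required for the next step, so a finite stack of them composes to a $\pi_1$-injective inclusion $Y \hookrightarrow W$.

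It then remains to pass from finitely many reductions to countably many, and this is the step I expect to be the main obstacle, since $Y$ is the infinite intersection of the descending tower rather than a finite stage of it. Here I would argue by compactness: a loop $\ell$ in a component $Y_0$ that is null-homotopic in $X$ admits a null-homotopy $f\colon D^2\to X$ whose image meets only finitely many cells, so $\ell$ already bounds in some finite subcomplex $C\subseteq X$. Enlarging $C$ to the finite subcomplex $\hat C$ obtained by closing it up under the attaching paths of its $2$-cells (a process that adds only $1$- and $0$-cells and hence terminates), I would check that $Y\cup\hat C$ is obtained from $Y$ by \emph{finitely many} simple elementary reductions: attaching the cells of $\hat C\setminus Y$ in decreasing order of their removal stage makes every attaching path available when its $2$-cell is glued, and simplicity transfers down each of the resulting $\pi_1$-injective subcomplex inclusions because being a proper power can only be acquired, never lost, in a larger group. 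The finite case then applies to $Y\hookrightarrow Y\cup\hat C$, so $[\ell]=1$ in $\pi_1$ of the component of $Y\cup\hat C$ containing $Y_0$, whence $[\ell]=1$ in $\pi_1(Y_0)$, giving the desired injectivity. I expect the only genuinely delicate points to be this localization to a finite closed subcomplex and the verification that the induced finite attaching order is a legitimate chain of simple elementary reductions; everything else is the formal bookkeeping around the two cited theorems of Howie.
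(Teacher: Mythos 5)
Your proof is correct, but it is worth noting that it does substantially more work than the paper, which disposes of this lemma purely by citation: the statement is declared an immediate consequence of \cite[Theorem 4.3]{h0} and \cite[Theorem 4.2]{h1} (see also \cite[Corollary 3.2]{li}), Howie's theorems being formulated for full (countable) reductions rather than for single elementary reductions. What you have done, in effect, is re-prove the tower-theoretic content of those theorems: you assume only the single-step versions (injectivity and preservation of local indicability across one simple elementary reduction) and recover the countable case by hand. Your two ``delicate points'' are indeed the crux, and both are handled correctly: a null-homotopy has compact image and hence lands in a finite subcomplex $\hat C$; re-attaching the cells of $\hat C \setminus Y$ in decreasing order of removal stage works because the attaching path of the $2$-cell removed at stage $i$ lies in $X_i \cup \{e_i\}$ and $\hat C$ is closed under attaching paths; and both the non-homotopy condition defining an elementary reduction and the no-proper-power condition pass down from $X_i \cup \{e_i\}$ to the smaller complex (for the latter one needs the small caveat that the image of the attaching map is nontrivial upstairs, which the elementary-reduction condition itself supplies, since images of proper powers are proper powers). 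This is essentially the argument inside Howie's own proofs, so your route is a legitimate, self-contained alternative; what it buys is independence from the exact formulation of the cited theorems, at the cost of redoing the bookkeeping. The one point you should make explicit rather than gloss over is that your single-step statement must be applied in its disconnected form: the tower from $Y$ up to $Y \cup \hat C$ can merge components of $Y$, so the inductive step needs free products and one-relator products of locally indicable groups (which is exactly the generality in which Howie's theorems are stated), not just the connected case you formulate.
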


Lemmas \ref{essential} and \ref{injective} are used to prove the following result about liftings of $1$-cells involved in elementary reductions. Given a $2$-cell $R$ we denote by $\partial_pR$ its attaching path and $\partial R$ the $1$-complex induced by the edges in $\partial_pR$.

\begin{proposition} \label{onetime}
	Let $(X,Y)$ be a (simple) $(R,e)$-elementary reduction and $\rho:\Tilde{X} \to X$ be the universal covering of $X$. If $\pi_1Y$ is locally indicable, the attaching path of any $2$-cell $\Tilde{R} \in \rho^{-1}(R)$ traverses each $1$-cell $\Tilde{e} \in \rho^{-1}(e)$ at most once.
\end{proposition}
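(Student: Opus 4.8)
The plan is to argue by contradiction. Suppose some $2$-cell $\Tilde R \in \rho^{-1}(R)$ traverses some lift $\Tilde e \in \rho^{-1}(e)$ more than once. Since $\rho$ is a covering, the attaching path $\partial_p\Tilde R$ projects to $\partial_p R$, so each traversal of $\Tilde e$ corresponds to a traversal of $e$ in $\partial_p R$. The idea is to cut $\partial_p\Tilde R$ at two consecutive occurrences of $\Tilde e$ and thereby decompose $\partial_p R = P_1 P_2$ into two closed subpaths in $X$, each of which traverses $e$, exactly as in the hypothesis of Lemma \ref{essential}. The subtlety here is basepoints and the passage between $\Tilde X$ and $X$: I first want to use the fact that the lift $\Tilde e$ visits a fixed pair of endpoints in $\Tilde X$, and that two traversals of the \emph{same} lifted edge $\Tilde e$ force the intermediate portion of $\partial_p\Tilde R$ to be a loop in $\Tilde X$ based at an endpoint of $\Tilde e$; projecting down gives a closed subpath in $X$ through $e$.

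Once I have the decomposition $\partial_p R = P_1 P_2$ with each $P_i$ a closed path traversing $e$, I would apply Lemma \ref{essential}: since $(X,Y)$ is an $(R,e)$-elementary reduction and $\pi_1 Y$ is locally indicable by hypothesis, the lemma asserts that each $P_i$ is \emph{essential} in $X$, i.e. represents a nontrivial element of $\pi_1 X$. The contradiction will come from lifting: the concatenation $P_1 P_2 = \partial_p R$ is null-homotopic in $X$ (it bounds the $2$-cell $R$), and it is the image of the closed path $\partial_p\Tilde R$ in $\Tilde X$. But the subpath $\Tilde{P}_1$ of $\partial_p\Tilde R$ lying between the two occurrences of $\Tilde e$ is itself a \emph{loop} in $\Tilde X$ (its endpoints coincide because they are both the same endpoint of $\Tilde e$). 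A loop in the universal cover $\Tilde X$ is null-homotopic, so its projection $P_1$ is null-homotopic in $X$, hence inessential — contradicting Lemma \ref{essential}.

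I would be careful about one point: to invoke Lemma \ref{essential} I must genuinely have a \emph{closed} path $P_i$ in $X$ traversing $e$, and to derive the contradiction I must ensure that at least one of the two lifted subpaths $\Tilde P_i$ is a loop in $\Tilde X$. If $\Tilde e$ is traversed twice by $\partial_p\Tilde R$, then reading around $\partial_p\Tilde R$ we encounter a fixed endpoint $\Tilde v$ of $\Tilde e$ at two distinct positions; the portion of $\partial_p\Tilde R$ strictly between these two positions returns to $\Tilde v$, so it is a loop based at $\Tilde v$. If the two traversals of $\Tilde e$ have opposite orientations one has to match the right endpoints, but in either orientation some subpath returns to a common vertex and yields a genuine loop upstairs. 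The main obstacle I expect is precisely this orientation-and-basepoint bookkeeping: making sure the cut produces two closed subpaths downstairs (so that Lemma \ref{essential} applies) while guaranteeing that the corresponding subpath upstairs is a loop (so that its projection is null-homotopic). The simple-reduction hypothesis and local indicability of $\pi_1 Y$ feed only into Lemma \ref{essential}; once the combinatorial decomposition is set up correctly, the contradiction between ``essential downstairs'' and ``loop upstairs, hence null-homotopic'' closes the argument.
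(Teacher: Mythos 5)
There is a genuine gap in your argument, and it sits exactly where you anticipated trouble: the case where the two traversals of $\Tilde{e}$ have opposite orientations. Write (up to cyclic rotation) $\partial_p\Tilde{R} = P_1\Tilde{e}P_2\Tilde{e}^{-1}$, where $P_1$ is a loop at the initial vertex of $\Tilde{e}$ and $P_2$ is a loop at its terminal vertex. Cutting at the two visits to the terminal vertex of $\Tilde{e}$, as you propose, gives downstairs the decomposition of (a cyclic rotation of) $\partial_p R$ as $\rho(P_2)\cdot\bigl(e^{-1}\rho(P_1)e\bigr)$, a product of two closed paths. But if $P_2$ traverses no edge of $\rho^{-1}(e)$, then the piece $\rho(P_2)$ does not traverse $e$, so the hypothesis of Lemma \ref{essential} fails and the lemma simply cannot be invoked; no choice of cut points repairs this, because the statement you would need (that $\rho(P_2)$ cannot be the projection of a loop) is not a consequence of Lemma \ref{essential} at all. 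This is precisely the sub-case where the paper switches to a different mechanism: since $P_1$ is a loop in the simply connected $\Tilde{X}$, $\rho(P_1)$ is null-homotopic in $X$; since $\rho(P_1)$ is a closed path lying in $Y$ and, by Lemma \ref{injective} (which is where the simple-reduction hypothesis and local indicability of $\pi_1Y$ enter), the inclusion $Y\hookrightarrow X$ is $\pi_1$-injective, $\rho(P_1)$ is already null-homotopic in $Y$; hence $\partial_p R = \rho(P_1)e\rho(P_2)e^{-1}$ is homotopic in $Y\cup\{e\}$ to $e\rho(P_2)e^{-1}$, i.e.\ freely to $\rho(P_2)$, a path in $Y$, contradicting the defining clause of an elementary reduction. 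Your closing assertion that local indicability and simplicity ``feed only into Lemma \ref{essential}'' is therefore exactly what is false: the proof also needs Lemma \ref{injective} and a direct appeal to the definition of elementary reduction.

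In the remaining cases your plan does work, and it coincides with the paper's argument. When the two traversals have the same orientation, cutting at the two visits to the initial vertex of $\Tilde{e}$ produces two loops upstairs whose projections are closed paths each traversing $e$ once, and Lemma \ref{essential} gives the contradiction just as you describe. When the traversals have opposite orientations but $P_2$ does traverse some other lift $\Tilde{e}_2\in\rho^{-1}(e)$, your cut also works: both pieces $\rho(P_2)$ and $e^{-1}\rho(P_1)e$ are closed, traverse $e$, and lift to loops, so Lemma \ref{essential} again applies (this is the paper's remaining sub-case). The uncovered situation is solely the opposite-orientation case in which the subpath enclosed by the two traversals avoids $\rho^{-1}(e)$ entirely, and there the argument requires the additional input described above.
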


\begin{proof}
	Suppose that the attaching path of a $2$-cell $\Tilde{R} \in \rho^{-1}(R)$ traverses an edge $\Tilde{e} \in \rho^{-1}(e)$ more than once. Then, we can either write the attaching path of $\Tilde{R}$ as $P_1\Tilde{e}P_2\Tilde{e}^{-1}$ where $P_1$ and $P_2$ are closed paths in $\Tilde{R}$, or we can write it as $P_1P_2$ where $P_1$ and $P_2$ are closed paths in $\Tilde{R}$ that traverse $\Tilde{e}$. In both cases we will use the fact that, since $\Tilde{X}$ is simply connected, $\rho_{*}([P_1])=\rho_{*}([P_2])=1_{\pi_1X}$. Now, for the latter case, note that $\partial_pR = \rho(P_1P_2) = \rho(P_1)\rho(P_2)$. This cannot occur since, by Lemma \ref{essential}, each $\rho(P_i)$ should be essential in $X$. In the first case, we have two possible situations. If neither $P_1$ nor $P_2$ traverses an edge in $\rho^{-1}(e)$, then $\partial_pR = \rho(P_1P_2) = \rho(P_1)e\rho(P_2)e^{-1}$. Since $\rho(P_1)$ is a closed path on $Y$ and $\rho_{*}([P_1]) = 1_{\pi_1X}$, by  Lemma \ref{injective} $\partial_pR$ is homotopic in $Y \cup \{e\}$ to $\rho(P_2)$, a path that does not traverse $e$. This contradicts the definition of elementary reduction. On the other hand, if $P_2$ traverse an edge $\Tilde{e}_2 \in \rho^{-1}(e)$, we can write $\partial_p\Tilde{R} = P_1\Tilde{e}P_2^{1}\Tilde{e}_2P_2^{2}\Tilde{e}^{-1}$ with $P_2^{1},P_2^{2}$ subpaths of $P_2$. Up to cyclic conjugation, we get that $\partial_pR = \rho(\Tilde{e}^{-1}P_1\Tilde{e})\rho(P_2^{1}\Tilde{e}_2P_2^{2})$. This is also a contradiction since, by Lemma \ref{essential}, the closed paths $\rho(\Tilde{e}^{-1}P_1\Tilde{e})$ and $\rho(P_2^{1}\Tilde{e}_2P_2^{2})$ should be essential in $X$, but $[\rho(\Tilde{e}^{-1}P_1\Tilde{e})]=\rho_{*}([\Tilde{e}^{-1}P_1\Tilde{e}])=1_{\pi_1X}$.
\end{proof}

\begin{corollary} \label{onetimecor}
	Let $X,Y,Z$ be combinatorial $2$-complexes where $Y$ is a connected simple reduction of $X$, and $(Y,Z)$ is a simple $(R,e)$-elementary reduction. Let $\rho:\Tilde{X} \to X$ be the universal covering of $X$. If $\pi_1Z$ is locally indicable, the attaching path of any $2$-cell $\Tilde{R} \in \rho^{-1}(R)$ traverses each $1$-cell $\Tilde{e} \in \rho^{-1}(e)$ at most once.
\end{corollary}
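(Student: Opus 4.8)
The plan is to deduce the statement directly from Proposition \ref{onetime}, the point being that over the subcomplex $Y$ the universal cover $\rho:\Tilde X\to X$ restricts to the universal cover of $Y$. The first step is to record that $\pi_1Y$ is locally indicable. Since $(Y,Z)$ is a simple $(R,e)$-elementary reduction, $\pi_1Y$ is built from $\pi_1Z$ by attaching $e$ --- which replaces $\pi_1Z$ by a free product of locally indicable groups (adjoining a free $\mathbb{Z}$ factor, or merging two components), hence again locally indicable --- and then by the one-relator quotient by $[\partial_pR]\in\pi_1(Z\cup\{e\})$, which is not a proper power. By the Brodskii--Howie theorem on local indicability of one-relator quotients \cite{h1}, this last operation also preserves local indicability, so $\pi_1Z$ locally indicable forces $\pi_1Y$ locally indicable.

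The second, and main, step is a covering-theoretic identification. Because $Y$ is a connected simple reduction of $X$ and $\pi_1Y$ is locally indicable, Lemma \ref{injective} gives that $\pi_1Y\hookrightarrow\pi_1X$ is injective. I would then argue that every connected component $\Tilde Y$ of $\rho^{-1}(Y)$ is the universal cover of $Y$: the lift of a loop $\gamma$ of $Y$ based at a point of $\Tilde Y$ stays inside $\Tilde Y$ (a lift of a path contained in $Y$ is connected and lies in $\rho^{-1}(Y)$), and it closes up there exactly when $[\gamma]=1$ in $\pi_1X$, since $\Tilde X$ is simply connected. Hence the subgroup of $\pi_1Y$ classifying $\Tilde Y\to Y$ equals $\ker(\pi_1Y\to\pi_1X)$, which is trivial by the injectivity above; thus $\Tilde Y$ is simply connected and $\rho|_{\Tilde Y}:\Tilde Y\to Y$ is the universal cover of $Y$. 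This identification is where the hypotheses do their work, and I expect it to be the only delicate part of the argument.

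It remains to transfer the conclusion of Proposition \ref{onetime} back to $\Tilde X$. Any $2$-cell $\Tilde R\in\rho^{-1}(R)$ lies in a single component $\Tilde Y$ of $\rho^{-1}(Y)$, and its attaching path $\partial_p\Tilde R$, being a lift of the path $\partial_pR\subseteq Y$, is contained in $\Tilde Y$; in particular every $1$-cell $\Tilde e\in\rho^{-1}(e)$ that $\partial_p\Tilde R$ traverses lies in $\Tilde Y$. Applying Proposition \ref{onetime} to the simple $(R,e)$-elementary reduction $(Y,Z)$ and the universal covering $\rho|_{\Tilde Y}:\Tilde Y\to Y$ --- legitimate since $\pi_1Z$ is locally indicable --- we conclude that $\partial_p\Tilde R$ traverses each such $\Tilde e$ at most once. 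Edges in $\rho^{-1}(e)$ lying in other components are not traversed by $\partial_p\Tilde R$ at all, so the bound holds for every $\Tilde e\in\rho^{-1}(e)$, as required.
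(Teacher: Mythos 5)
Your proof is correct and follows essentially the same route as the paper's: establish local indicability of $\pi_1Y$ via Howie's theorem on simple elementary reductions (\cite[Theorem 4.2]{h1}, which you re-derive via the Brodskii--Howie one-relator quotient argument), use Lemma \ref{injective} to get $\pi_1$-injectivity of $Y\hookrightarrow X$ and hence identify the components of $\rho^{-1}(Y)$ with the universal cover $\Tilde{Y}$ of $Y$, and then apply Proposition \ref{onetime} to the pair $(Y,Z)$. The only difference is expository: you spell out the covering-theoretic identification that the paper states in one line.
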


\begin{proof}
	Since $\pi_1Z$ is locally indicable and $(Y,Z)$ is a simple elementary reduction, by \cite[Theorem 4.2]{h1} $\pi_1Y$ is also locally indicable (see also \cite[Corollary 3.2]{li}).  By Lemma \ref{injective}, the inclusion $Y \hookrightarrow X$ is $\pi_1$-injective. If $\rho': \Tilde{Y} \to Y$ is the universal covering of $Y$, we can identify $\Tilde{Y}$ with a subcomplex $\Tilde{Y} \subset \Tilde{X}$. The result now follows from Proposition \ref{onetime} applied to the pair $(Y,Z)$.  
\end{proof}

\begin{proposition}\label{esreducible}
	Let $\mathcal{P}$ be a group presentation. Let $X = K_{\mathcal{P}}$ and let $\hat{X} \to X$ be an $H$-covering. If $X$ is weakly $H$-slim, then any subcomplex $Y\subseteq \hat X$ is reducible without proper powers. In particular, $\pi_1Y$ is locally indicable.	
\end{proposition}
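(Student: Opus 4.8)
The plan is to prove that every subcomplex $Y \subseteq \hat{X}$ admits an elementary reduction (provided it is not contained in the $0$-skeleton), and that these reductions are simple (without proper powers), so that the reducibility is witnessed entirely within $\hat{X}$. Since $\hat{X}$ is weakly $H$-slim, it comes equipped with a $G$-invariant preorder on its edges and, for each $2$-cell, a distinguished strictly minimal edge $\min(R)$. The key idea is to use this preorder to locate, in any finite subcomplex, a $2$-cell whose minimal edge can be stripped off.

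First I would reduce to the case of a \emph{finite} subcomplex $Y$, since reducibility is defined via finite subcomplexes and the minimal edges give a way to peel off cells one at a time; the countably-many reductions in the definition can be organized edge by edge. Given a finite $Y \not\subseteq \hat{X}^{(0)}$, I would consider the set of $2$-cells of $Y$ together with their distinguished minimal edges. If $Y$ contains no $2$-cells, it reduces trivially by removing free $1$-cells. If $Y$ does contain $2$-cells, I would look at the collection $\{\min(R) : R \text{ a } 2\text{-cell of } Y\}$ and, using the preorder, select a $2$-cell $R_0$ whose minimal edge $\min(R_0)$ is maximal among these (such a maximal element exists since $Y$ is finite). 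The crucial consequence of condition $(3)$ of weak $G$-slimness is that $\min(R_0)$ cannot appear on the boundary of any \emph{other} $2$-cell $R' \neq R_0$ of $Y$: if it did, condition $(3)$ would force $\min(R') > \min(R_0)$, contradicting maximality. Hence $\min(R_0)$ lies on $\partial R_0$ only, and by condition $(2)$ the attaching path of $R_0$ traverses it exactly once (I would invoke Proposition \ref{onetime} or Corollary \ref{onetimecor}, applied to the simple $(R_0,\min(R_0))$-elementary reduction $(W,T)$ guaranteed by condition $(2)$, to control the number of traversals). This exhibits $(Y, Y \setminus \{R_0, \min(R_0)\})$ as an elementary reduction, which is simple because the witnessing reduction in condition $(2)$ is simple.

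Iterating this procedure removes $2$-cells one at a time, and after all $2$-cells are gone the remaining $1$-complex is reduced by stripping free edges; this shows every finite $Y$ is reducible without proper powers, and hence so is every subcomplex. For the final claim, that $\pi_1 Y$ is locally indicable, I would invoke the standard fact (Howie \cite{h1}, cf. the remarks preceding Lemma \ref{injective}) that a connected $2$-complex which is reducible without proper powers has locally indicable fundamental group; applying this on each component of $Y$ gives the conclusion.

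\textbf{The main obstacle} I anticipate is verifying the traversal condition cleanly: condition $(2)$ of weak $G$-slimness only asserts the existence of a simple $(R,\min(R))$-elementary reduction inside a simple reduction $W \subseteq Z$, and one must confirm that the minimal edge is genuinely traversed exactly once in $Y$ so that its removal yields a legitimate elementary reduction (i.e. the attaching path is not homotopic into the smaller complex). This is precisely where Proposition \ref{onetime} and Corollary \ref{onetimecor} enter, but care is needed because those results are stated for the universal cover $\tilde{X}$ of $X$, whereas here we work inside the $H$-covering $\hat{X}$ and its subcomplexes. I would bridge this gap using the factorization $\rho_2 : \tilde{X} \to \hat{X}$ described after the definition of $G$-coverings, together with the local indicability already established for the relevant subcomplexes, to transfer the at-most-once traversal property from $\tilde{X}$ down to $Y \subseteq \hat{X}$.
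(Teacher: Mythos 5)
There are two problems with your argument, one minor and one structural. The minor one: you choose $R_0$ so that $\min(R_0)$ is \emph{maximal} among the minimal edges of $2$-cells of $Y$, and claim that condition $(3)$ then forces $\min(R') > \min(R_0)$ whenever $\min(R_0)$ appears on $\partial R'$. Reading condition $(3)$ with $R_1 = R_0$ and $R_2 = R'$ gives the \emph{opposite} inequality, $\min(R_0) > \min(R')$, which is perfectly compatible with maximality; so the chosen edge may well lie on the boundary of other $2$-cells and the peeling step collapses. The correct choice is to take $\min(R_0)$ \emph{minimal} (as the paper does): then $\min(R_0) > \min(R')$ contradicts minimality and edge-freeness follows. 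This slip is easily repaired.

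The structural gap is the step where you declare $(Y, Y\setminus\{R_0,\min(R_0)\})$ to be a simple elementary reduction. Being an elementary reduction is a non-homotopy condition: the attaching path of $R_0$ must not be homotopic in $Y\setminus\{R_0\}$ to a path avoiding $e_0=\min(R_0)$, and such a homotopy may use \emph{all} the $2$-cells of $Y\setminus\{R_0\}$. Condition $(2)$ of weak slimness gives you the non-homotopy (and the no-proper-power property) only relative to $T$, i.e.\ inside $W=T\cup\{e_0,R_0\}$. These properties do pass to subcomplexes, so they transfer to $Y$ when $Y\subseteq W$; but you never treat the case $Y\not\subseteq W$, and there nothing transfers, since a homotopy in $Y\setminus\{R_0\}$ can run over $2$-cells lying outside $W$. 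A traversal count cannot substitute for the non-homotopy condition, and in any case your route to the count is circular: Proposition \ref{onetime} and Corollary \ref{onetimecor} require local indicability of the relevant subcomplexes, which is precisely the conclusion of Proposition \ref{esreducible} (the paper proves this proposition first and only then feeds it into Corollary \ref{onetimecor} to get the ``exactly once'' statement needed for slimness in Theorem \ref{principal}). The idea missing from your proposal is the one the paper uses to handle $K\not\subseteq W$: since $W$ is by hypothesis a \emph{simple reduction} of $\hat X$, there is a chain of simple elementary reductions from $\hat X$ down to $W$; take the last complex in the chain containing the finite subcomplex $K$, and observe that the next elementary reduction in the chain removes a cell of $K$ and restricts to a simple elementary reduction of $K$ (an $(R,e)$-reduction if $K$ contains the $2$-cell, a free-edge reduction if it only contains the $1$-cell). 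With this case analysis no traversal count is needed at all; even the edge-freeness you tried to extract from condition $(3)$ is automatic inside $W$, because in the $(R_0,e_0)$-elementary reduction $(W,T)$ the edge $e_0$ lies on no $2$-cell of $T$.
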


\begin{proof}
	It suffices to show that any finite subcomplex $K\subset \hat X$ of dimension $2$ has a simple elementary reduction (see \cite[Section 2]{h1}). Given such a subcomplex $K$, take a $2$-cell $R$ of $K$ such that $\min(R)$ is minimal in the set of $1$-cells of $K$. By definition, there is a simple $(R,\min(R))$-elementary reduction $(W,T)$ where $W$ is a simple reduction of $\hat X$. If $K\subseteq W$, we are done. If not, since $W\subset \hat X$ is a simple reduction, there exists a minimal simple reduction $W\subset Y$ containing $K$. Then $Y$ has a simple elementary reduction which is also a  simple elementary reduction in $K$.   
\end{proof}

\begin{thm} \label{principal}
	Let $\mathcal{P}$ be a group presentation. Let $X = K_{\mathcal{P}}$ and let $\hat{X} \to X$ be an $H$-covering. Suppose  that $\pi_1X$ is left-orderable. If $X$ is weakly $H$-slim, then $X$ is slim. In particular, $X$ has the non-positive immersion property.
\end{thm}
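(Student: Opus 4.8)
The plan is to construct a $\pi_1X$-invariant preorder on the edges of the universal cover $\tilde X$ from the given $G$-invariant preorder on $\hat X$, and then verify the three conditions of Definition~\ref{slim}. The key tool is the covering map $\rho_2:\tilde X\to\hat X$ factoring $\rho=\rho_1\rho_2$, together with the left-orderability of $\pi_1X$. First I would fix a left-order on $\pi_1X$. The covering $\hat X\to X$ is associated to a surjection $\varphi:\pi_1X\to H$, and $\rho_2$ sends the edge $\tilde a_g$ (for $g\in\pi_1X$) to $\hat a_{\varphi(g)}$, as recorded in the excerpt. The idea is to \emph{refine} the pulled-back preorder: given two edges $\tilde e_1,\tilde e_2$ of $\tilde X$, compare their images $\rho_2(\tilde e_1),\rho_2(\tilde e_2)$ in the $G$-invariant preorder on $\hat X$ first, and break ties (edges mapping to the same or $\preceq$-equivalent edges of $\hat X$) using the left-order on the group coordinate $g\in\pi_1X$. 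One must check this combined relation is a genuine preorder and is $\pi_1X$-invariant; invariance of the first component follows because $\rho_2$ intertwines the $\pi_1X$-action with the $H$-action through $\varphi$ (so $h\cdot\tilde a_g=\tilde a_{hg}\mapsto\hat a_{\varphi(h)\varphi(g)}$, and the preorder on $\hat X$ is $H$-invariant), while invariance of the tie-breaking component is exactly left-invariance of the order on $\pi_1X$.

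Next I would verify condition~(2) of slimness, the existence of a \emph{unique strictly minimal} edge on each $2$-cell $\tilde R$ traversed \emph{exactly once}. Here the refinement is essential: the $G$-invariant preorder on $\hat X$ already gives, via weak $H$-slimness, a strictly minimal $\min(R)$ on the image $2$-cell $R=\rho_2(\tilde R)$, but minima in $\hat X$ need not lift to unique minima and the weak condition only guarantees $\min(R)$ is strictly minimal, not traversed once. The left-order tie-breaking promotes the preorder to having unique strict minima among the finitely many edges over any fixed edge of $\hat X$. To control the traversal count, I would invoke Proposition~\ref{onetime} (or Corollary~\ref{onetimecor}): by Proposition~\ref{esreducible}, weak $H$-slimness makes every subcomplex of $\hat X$ reducible without proper powers and hence $\pi_1$ of the relevant reductions locally indicable, so the hypotheses of the one-time-traversal results are met. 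This yields that the lift of $\min(R)$ realizing the minimum is traversed exactly once by $\partial_p\tilde R$, giving a \emph{unique} strictly minimal edge traversed once.

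Finally, condition~(3) — that $\min(\tilde R_1)>\min(\tilde R_2)$ whenever $\min(\tilde R_1)$ appears on $\partial\tilde R_2$ for distinct $2$-cells — transfers from condition~(3) of weak $H$-slimness. If $\min(\tilde R_1)$ lies on $\partial\tilde R_2$, applying $\rho_2$ shows $\min(R_1)$ (up to the identification of lifts) appears on $\partial R_2$ in $\hat X$, and $R_1\neq R_2$ in $\hat X$ can be arranged since two distinct $2$-cells of $\tilde X$ over the \emph{same} cell of $\hat X$ are handled by the tie-breaking order, whereas over distinct cells the strict inequality $\min(R_1)>\min(R_2)$ in $\hat X$ pulls back through the first (dominant) component of the refined order. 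With the three conditions checked, $\tilde X$ satisfies Definition~\ref{slim}, so $X$ is slim, and by \cite[Corollary~5.6]{hewi} it has non-positive immersions.

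I expect the main obstacle to be condition~(2): reconciling the \emph{weak} reduction-based formulation in $\hat X$ with the \emph{strict} once-traversed unique-minimum requirement in $\tilde X$. The delicate point is ensuring that the tie-breaking by the left-order does not accidentally create a new minimal edge that is traversed more than once, and that the minimum selected in $\tilde X$ is genuinely the lift along which $\partial_p\tilde R$ passes exactly once; this is precisely where Proposition~\ref{onetime} and the local indicability supplied by Proposition~\ref{esreducible} must be combined carefully, and where the interaction between the two components of the refined preorder needs the most attention.
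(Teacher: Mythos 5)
Your proposal is correct and follows essentially the same route as the paper's proof: the lexicographic refinement of the pulled-back preorder by the left-order on $\pi_1X$, uniqueness of strict minima via the total order on each fiber $\rho_2^{-1}(\min(\rho_2(\tilde R)))$, the once-traversed condition via Proposition \ref{esreducible} combined with Corollary \ref{onetimecor}, and condition (3) by splitting into the cases $\rho_2(\tilde R_1)\neq\rho_2(\tilde R_2)$ and $\tilde R_1=g\cdot\tilde R_2$. The ``delicate point'' you flag at the end is handled exactly as you anticipate, so no further work is needed.
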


\begin{proof}
	Let $G=\pi_1X$ and let $\varphi:\pi_1X\to H$ be a surjective homomorphism inducing the $H$-covering  $\hat{X} \to X$. We will use the notation introduced at the beginning of this section. Let $\rho: \Tilde{X} \to X$ be the universal cover. The (covering) map $\rho_2:\Tilde{X}\to \hat{X}$ satisfies $\rho_2(g) = \varphi(g)$ and $\rho_2(\Tilde{a}_g)=\hat a_{\varphi(g)}$ for each $g$ and $\Tilde{a}_g$.

	We define first a preorder on the $1$-cells of $\Tilde{X}$ using the preorder on the $1$-cells of $\hat{X}$ and the order of $G$ as follows. Note that every $1$-cell of $\Tilde{X}$ has the form $\Tilde{a}_{g}$ for some generator $a$ of $\mathcal{P}$ and some $g \in G$. We assign to each  $\Tilde{a}_{g}$ the pair $(\hat a_{\varphi(g)},g)$. This assignment is injective. We equip the $1$-cells of $\tilde X$ with the lexicographic order with respect to their associated pairs. Note that this preorder is $\pi_1X$-invariant. This follows from the $H$-invariance of the preorder of the $1$-cells of $\hat{X}$ (and the fact that $\rho_2(g' \cdot \Tilde{a}_{g}) = \varphi(g')\cdot \hat a_{\varphi(g)}$) and the left-orderability of $G$
	
	We prove now that there is a unique strictly minimal $1$-cell in $E(\partial \tilde R)$ for any $2$-cell $\tilde R$ of $\Tilde{X}$. Given $\tilde R$, let  $\Tilde{a}_{g}$  be a minimal $1$-cell in $E(\partial \tilde R)$. We have to see that it is unique. Since $\rho_2(\partial \tilde R) = \partial \rho_2(\tilde R)$ and  $\Tilde{a}_{g}$ is minimal, the first coordinate $\hat a_{\varphi(g)}$ of the pair assigned to  $\Tilde{a}_{g}$ must be $\min(\rho_2(\tilde R))$. This means that the candidates for minimal $1$-cells in $E(\partial \tilde R)$ are in the same orbit $\rho_2^{-1}(\min(\rho_2(\tilde R)))$, and the left-orderability of $\pi_1X$ induces a total order for these $1$-cells. Therefore, there is a unique strictly minimal $1$-cell in $E(\partial \tilde R)$ and $\min(\tilde R)$ is well-defined. 
	The second part of condition $(2)$ in the definition of slimness follows from Proposition \ref{esreducible} and Corollary \ref{onetimecor}. 
	
	It remains to prove that condition $(3)$ holds for any pair of $2$-cells $\Tilde{R}_1$ and $\Tilde{R}_2$. Suppose that $\min(\Tilde{R}_1) = \min(\Tilde{R}_2)$. If $\rho_2(\Tilde{R}_1) \neq \rho_2(\Tilde{R}_2)$ we get immediately a contradiction since $\min(\rho_2(\Tilde{R}_1)) = \rho_2(\min(\Tilde{R}_1))$ and $\min(\rho_2(\Tilde{R}_2)) = \rho_2(\min(\Tilde{R}_2))$ but $\min(\rho_2(\Tilde{R}_1))\neq \min(\rho_2(\Tilde{R}_2))$. If $\rho_2(\Tilde{R}_1) = \rho_2(\Tilde{R}_2)$, then $\Tilde{R}_1 = g\cdot \Tilde{R}_2$ for some $g \in G$ with $g \neq 1$. The $\pi_1X$-invariance of the given preorder implies that $\min(\Tilde{R}_1) = g\cdot \min(\Tilde{R}_2) \neq \min(\Tilde{R}_2)$, which is again a contradiction.
\end{proof}

\begin{remark}\label{skipleftord}
Note that, in the statement of Theorem \ref{principal}, the condition on the left- orderability of $\pi_1 X$ can be dropped if we require the group $H$ to be locally indicable. In that case, $\pi_1X$ is automatically locally indicable (and, in particular, left-orderable). This follows from the short exact sequence $1\to \ker \varphi \to \pi_1X\to H\to 1$ and Proposition \ref{esreducible}.
\end{remark}

\section{The non-positive immersion property for generalized Wirtinger presentations}\label{applications}

Let $\mathcal{P}$ be a (finite) presentation of a group $G$ and $\varphi : G \to \mathbb{Z}$ a surjective homomorphism. By changing, if necessary, a generator by its inverse, we can assume that $\varphi(a)$ is non-negative for every generator $a$ of $\mathcal{P}$. Given a word $w$ on the generators,  $\varphi(w)$ is called the weight of $w$ (with respect to $\varphi$). For each relator $r$, we consider the sequence of weights of initial subwords of $r$ and define the multiset $m_{\varphi}(r)$ of its minima as follows. If the minimum of the sequence is $m$, then $m_{\varphi}(r)$ contains a copy of the letter (generator) $a$ for every initial subword $wa^{-1}$ with weight $m$, and we call them $\textit{negative copies}$, and another copy of $a$ for every initial subword $wa$ with weight $m + \varphi(a)$, and we call them $\textit{positive copies}$. We illustrate this with an easy example.

\begin{ej} \label{pres}
	Let $\mathcal{P} = \langle a, b, c \mid a^{-1}b, \ c^{-1}b^{-1}caba^{-1}c^{-1}b^{-1}c^2 \rangle$ and define $\varphi : G(P) \to \mathbb{Z}$ by $\varphi(a) = \varphi(b) = \varphi(c) = 1$. The sequence for $r_2 = c^{-1}b^{-1}caba^{-1}c^{-1}b^{-1}c^2$ is $$-1, -2, -1, 0, 1, 0, -1, -2, -1, 0$$ and $m_{\varphi}(r_2) = \{b,c,b,c\}$ with two negative copies of $b$ and two positive copies of $c$. The sequence for $r_1 = a^{-1}b$ is $-1, 0$ and $m_{\varphi}(r_1) = \{a,b\}$ with a negative copy of $a$ and a positive copy of $b$.
\end{ej}

\begin{definition}\label{weakconca}
	A family of relators $\{r_1,\ldots, r_k\}$ is weakly concatenable (with respect to $\varphi : G \to \mathbb{Z}$), if there is an ordering $m_{\varphi}(r_{i_1}), \ldots , m_{\varphi}(r_{i_k})$ of their multisets of minima such that, for every $1 \leq j \leq k$, there exists an element $x \in m_{\varphi}(r_{i_j})$ such that $x \notin \cup_{s=1}^{j-1} m_{\varphi}(r_{i_s})$ and the number of positive and negative copies of $x$ in $m_{\varphi}(r_{i_j})$ are different.
\end{definition}

 The condition on the number of positive and negative copies of $x$ is used in the next proposition to prove that a certain lift $R_{j,i}$ of the $2$-cell $R_i$ (corresponding to
a relator $r_i$) properly involves a lift of the $1$-cell corresponding to the generator $x$.

\begin{proposition} \label{weak}
Let $\mathcal{P} = \langle a_1, \ldots, a_n \mid  r_1, \ldots, r_k \rangle$ be a presentation of a group $G$, with cyclically reduced relators and such that $H_1(G)$ is a nontrivial free abelian group of rank $n-k$. Let $\varphi : G \to \mathbb{Z}$ be a surjective homomorphism. If $\{r_1, \ldots , r_k\}$ is weakly concatenable with respect to $\varphi$, then $\mathcal{K}_{\mathcal{P}}$ is weakly $\mathbb{Z}$-slim.
\end{proposition}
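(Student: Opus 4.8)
The plan is to take $Z=\hat X$ to be the infinite cyclic covering of $X=\k_{\mathcal P}$ determined by $\varphi$, and to read off the required preorder from the combinatorics of the multisets $m_\varphi(r_i)$. First I would record the translation between $m_\varphi$ and the geometry of $\hat X$. Recall that a $1$-cell $\hat a_m$ of $\hat X$ is the lift of the generator $a$ starting at the vertex of level $m$, so (using $\varphi(a)\ge 0$) it occupies the levels between $m$ and $m+\varphi(a)$ and has \emph{bottom level} $m$. A lift $R_{i,\ell}$ of the $2$-cell $R_i$ whose attaching path starts at level $\ell$ then has bottom level $b=\ell+m_i$, where $m_i\le 0$ is the minimum of the weight sequence of $r_i$; its minimal $1$-cells are exactly the lifts $\hat a_{b}$ with $a$ in the support of $m_\varphi(r_i)$. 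A short computation shows that a positive and a negative copy of $a$ in $m_\varphi(r_i)$ correspond, respectively, to a forward and a backward traversal of the \emph{same} edge $\hat a_b$; hence the net number of times $\partial_p R_{i,\ell}$ runs over $\hat a_b$ equals the number of positive minus the number of negative copies of $a$. This is the geometric content of the last clause of Definition \ref{weakconca}: the distinguished generator $x$ of $r_{i_j}$ is traversed a nonzero net number of times by the bottom of $R_{i_j,\ell}$.

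Next I would build the $\mathbb{Z}$-invariant preorder. Let $r_{i_1},\dots,r_{i_k}$ be a weakly concatenable ordering, and for $1\le j\le k$ let $x_j$ be the distinguished generator of $r_{i_j}$; the newness clause gives $x_j\notin m_\varphi(r_{i_s})$ for $s<j$, so the $x_j$ are pairwise distinct and each $x_j$ \emph{first appears} at stage $j$. Choosing any total order $\sigma$ on the generators that orders them primarily by \emph{decreasing} first-appearance stage and places $x_j$ as the $\sigma$-least generator of its stage, I order the edges of $\hat X$ lexicographically by the pair (bottom level, $\sigma$-value). This is a $\mathbb{Z}$-invariant total order, since shifting levels is an order automorphism; thus condition $(1)$ holds and each $E(\partial R_{i,\ell})$ has a unique strict minimum. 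Moreover, the minimum of $E(\partial R_{i_j,\ell})$ is exactly $\hat{x_j}_{\,b}$: its least bottom level is attained on the support of $m_\varphi(r_{i_j})$, among those generators the ones of maximal first-appearance stage are precisely those new at stage $j$, and $x_j$ is $\sigma$-least among them. Hence $\min(R_{i_j,\ell})$ is a lift of $x_j$, traversed a nonzero net number of times by the previous paragraph.

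Condition $(3)$ then follows formally. If $R_1\ne R_2$ and $\min(R_1)$ lies on $\partial R_2$, then $\min(R_1)\ge\min(R_2)$, because $\min(R_2)$ is the least edge of $E(\partial R_2)$; it remains to exclude equality. An equality $\min(R_1)=\min(R_2)$ would force the two distinguished generators to coincide and the two bottom levels to agree. If $R_1,R_2$ lift the same relator this pins down the same level and hence $R_1=R_2$, a contradiction; if they lift different relators it contradicts the distinctness of the $x_j$. Thus $\min(R_1)>\min(R_2)$.

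The main work, and the expected obstacle, is the second clause of condition $(2)$: producing a simple $(R_{i_j,\ell},\min(R_{i_j,\ell}))$-elementary reduction $(W,T)$ with $W\subseteq\hat X$ a simple reduction. Here I would reuse the reducibility argument of \cite[Theorem 2.18]{bam1}: processing the relators in the concatenable order and using that $x_j$ is new at stage $j$, one peels $\hat X$ down, by simple elementary reductions, to a subcomplex $W$ in which the edge $e=\hat{x_j}_{\,b}=\min(R_{i_j,\ell})$ is attached together with $R_{i_j,\ell}$. The reduction $(W,T)$ is genuine because $\partial_p R_{i_j,\ell}$ traverses $e$ with nonzero net multiplicity, so it is not homotopic in $T\cup\{e\}$ into $T$ (its class has nonzero $e$-coordinate in $H_1$), and it is simple because the relators are cyclically reduced, which prevents $\partial_p R_{i_j,\ell}$ from being a proper power in $\pi_1(T\cup\{e\})$. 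The delicate point is the bookkeeping that guarantees, simultaneously for all lifts and all $j$, that the peeling can be arranged so that each required $e$ survives in $W$ precisely when its $2$-cell does; the newness clause is exactly what makes this triangular induction close, as in \cite{bam1}. Local indicability of the relevant subcomplexes, needed to invoke Lemmas \ref{essential} and \ref{injective} in the verification, comes for free from the reducibility via Howie's theorem \cite{h1}. Assembling conditions $(1)$--$(3)$ shows that $\k_{\mathcal P}$ is weakly $\mathbb{Z}$-slim.
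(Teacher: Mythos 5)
Your overall route coincides with the paper's: work in the infinite cyclic cover, order the edges lexicographically by bottom level and then by a generator ordering extracted from the concatenable ordering (the paper does this by reindexing the relators as $r_{n-k+1},\dots,r_n$ and the generators so that $\min(R_{j,i})=a_{j,i}$), and obtain the required elementary reductions by peeling $\hat X$ level by level and relator by relator; the paper's explicit subcomplexes $\hat{X}_{j,i}$ (the full subcomplex on the vertices $t\geq j$ with the cells $a_{j,s}$, $R_{j,s}$ for $s\geq i+1$ removed) are precisely the ``triangular bookkeeping'' you defer to \cite{bam1}. Your verification of conditions $(1)$, $(3)$ and of the first half of condition $(2)$ is correct and essentially identical to the paper's.

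The genuine gap is your justification that the reductions are \emph{simple}. You assert that the attaching path of $R_{i_j,\ell}$ cannot be a proper power in $\pi_1(T\cup\{e\})$ ``because the relators are cyclically reduced.'' This is false: being a proper power in $\pi_1(T\cup\{e\})$ is not controlled by the reduced form of the word at all. Concretely, take $\mathcal{P}=\langle a,b\mid aba^{-1}b^{-1}aba^{-1}b^{-1}\rangle$ with $\varphi(a)=\varphi(b)=1$. The relator is cyclically reduced and $m_\varphi(r)=\{a,a,b,b\}$, with two positive copies of $a$ and two negative copies of $b$, so $\{r\}$ is weakly concatenable in the sense of Definition \ref{weakconca}; yet, since $\varphi(aba^{-1}b^{-1})=0$, the attaching path of every lift of the $2$-cell in $\hat X$ is literally the square of a closed essential edge path, so no $(R,e)$-elementary reduction involving such a lift can be simple. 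This example is excluded from Proposition \ref{weak} only by the hypothesis that $H_1(G)$ is free abelian of rank $n-k$ (here $H_1(G)=\mathbb{Z}^2$ while $n-k=1$), a hypothesis your argument never invokes --- which is the telltale sign of the gap. The paper uses that hypothesis exactly at this point: it forces $\mathcal{K}_{\mathcal{P}}$ to have the homology of a wedge of circles, and then the conservativity of $\mathbb{Z}$ (Howie \cite[Section 5]{h2}; see also \cite[Theorem 2.5]{bam1} and \cite{ge0}) guarantees that all the reductions occurring in $\hat X$ are without proper powers. Your proof needs this (or an equivalent) argument; cyclic reducedness alone gives nothing.
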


\begin{proof}
Let $X = \mathcal{K}_{\mathcal{P}}$ and let $\hat X\to X$ be the cyclic covering of $X$ determined by $\varphi$. We denote  by $a_{j,i}$ the lift of the edge $a_i$ beginning at the vertex $j$. Let $R_i$ be the $2$-cell of $X$ corresponding to the relator $r_i$ and denote by $R_{j,i}$ the $2$-cell of $\hat{X}$ that covers $R_i$ for which the minimum $0$-cell is $j$.

We define a total ordering on the edges  $a_{j,i}$ of $\hat{X}$ as follows. First, for simplicity, we reindex the $k$  relators of $\pe$ (and the $2$-cells of $ \mathcal{K}_{\mathcal{P}}$) as $ r_{n-k+1}, \ldots, r_n$ (and $R_{n-k+1}, \ldots, R_n$). By reordering the relators of $\pe$ if necessary, we can assume that the weak concatenability occurs in such a way that $\min(R_{j,i})=a_{j,i}$. The ordering on the edges  $a_{j,i}$ is the lexicographic order on the pairs $(j,i)$. The first coordinate with the usual order of $\Z$, and the second one with the inverse of the usual order on $\{1,2,\ldots,n\}$. It is immediate to check that it is a $\mathbb{Z}$-invariant ordering.  Given $(j,i)$,  we consider the full subcomplex of $\hat X$ containing all the $0$-cells $t \geq j$, and remove from it the $2$-cells $R_{j,s}$ for $s\geq i+1$ and the $1$-cells $a_{j,s}$ for $s\geq i+1$. Note that we obtain a well defined subcomplex, which we denote by $\hat{X}_{j,i}$, since the  $1$-cells $a_{j,s}$ removed do not lie in the boundaries of the remaining $2$-cells $R_{j,l}$. Note also that  $(\hat{X}_{j,i},\hat{X}_{j,i-1})$  is an $(R_{j,i},a_{j,i})$-elementary reduction if $i\geq n-k+1$, since $\hat{X}_{j,i}$ is obtained from $\hat{X}_{j,i-1}$ by attaching the $2$-cell $R_{j,i}$ and the $1$-cell $a_{i}$, which is properly involved by definition of weak concatenability. For $1\leq i\leq n-k$, $(\hat{X}_{j,i},\hat{X}_{j,i-1})$ is just an $(a_{j,i})$-elementary reduction. All the reductions involved are without proper powers since the presentation complex has the homology of a wedge of circles and $\Z$ is conservative \cite [Section 5]{h2} (see also \cite [Theorem 2.5]{bam1}). If $R_1 \neq R_2$ are $2$-cells of $\hat{X}$, then $R_1 = R_{j_1,i_1}$ and $R_2 = R_{j_2,i_2}$ for some pairs of integers $(j_1,i_1) \neq (j_2,i_2)$. Clearly $\min(R_1)=a_{j_1,i_1} \neq a_{j_2,i_2} = \min(R_2)$.
\end{proof}

Combining Proposition \ref{weak} with Theorem \ref{principal}, we derive the main result of this section.

\begin{thm}\label{coro}
    Let $\mathcal{P} = \langle a_1, \ldots, a_n \mid  r_1, \ldots, r_k \rangle$ be a presentation of a group $G$, with cyclically reduced relators and with $H_1(G)$ nontrivial free abelian of rank $n-k$. Let $\varphi : G \to \mathbb{Z}$ be a surjective homomorphism. If $\{r_1, \ldots , r_k\}$ is weakly concatenable with respect to $\varphi$, then $ \mathcal{K}_{\mathcal{P}}$ has the non-positive immersion property.
\end{thm}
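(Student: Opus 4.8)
The plan is to deduce Theorem \ref{coro} directly by chaining together the two results already established in the paper. The statement is essentially a corollary: its hypotheses are exactly those of Proposition \ref{weak}, and its conclusion is the non-positive immersion part of the conclusion of Theorem \ref{principal}. So the proof is mostly a matter of verifying that the hypotheses of Theorem \ref{principal} are met, with $H=\Z$ as the covering group.

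First I would invoke Proposition \ref{weak}: since $\mathcal{P}$ has cyclically reduced relators, $H_1(G)$ is nontrivial free abelian of rank $n-k$, and $\{r_1,\ldots,r_k\}$ is weakly concatenable with respect to the surjection $\varphi\colon G\to\Z$, we conclude that $\k_{\mathcal{P}}$ is weakly $\Z$-slim. This provides the weakly $H$-slim hypothesis of Theorem \ref{principal} with $H=\Z$.

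The remaining hypothesis of Theorem \ref{principal} is the left-orderability of $\pi_1 X = G$. Here I would appeal to Remark \ref{skipleftord}: since the covering group $H=\Z$ is locally indicable (indeed $\Z$ is the prototypical locally indicable group), we need not verify left-orderability of $G$ separately. Instead, the short exact sequence $1\to\ker\varphi\to G\to\Z\to 1$ together with Proposition \ref{esreducible} — which guarantees that every subcomplex of the covering $\hat X$ is reducible without proper powers, hence that $\ker\varphi\cong\pi_1\hat X$ is locally indicable — forces $G$ itself to be locally indicable, and in particular left-orderable. With both hypotheses of Theorem \ref{principal} now in hand, that theorem yields that $\k_{\mathcal{P}}$ is slim and therefore has the non-positive immersion property.

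I do not anticipate a genuine obstacle, since all the substantive work is packaged into the cited results; the only point requiring a modicum of care is the bookkeeping of which group plays the role of $H$. The cleanest route is to observe that taking $H=\Z$ makes Remark \ref{skipleftord} applicable, so the left-orderability assumption in Theorem \ref{principal} is satisfied automatically and need not be hypothesized in the statement of Theorem \ref{coro}. Thus the proof reduces to the single sentence: combine Proposition \ref{weak} and Theorem \ref{principal} (using Remark \ref{skipleftord}).
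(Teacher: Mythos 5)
Your proposal is correct and is essentially identical to the paper's own proof, which cites exactly the same three ingredients: Proposition \ref{weak} to obtain weak $\Z$-slimness, Theorem \ref{principal} to pass from weak slimness to slimness and non-positive immersions, and Remark \ref{skipleftord} (with $H=\Z$ locally indicable) to dispense with the left-orderability hypothesis. Your unpacking of Remark \ref{skipleftord} via the short exact sequence and Proposition \ref{esreducible} matches the paper's reasoning precisely.
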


\begin{proof}
	This follows from Proposition \ref{weak}, Theorem \ref{principal}, and Remark \ref{skipleftord}.
\end{proof}

 The presentation of Example \ref{pres} has the homology of a circle and $\{r_1,r_2\}$ is weakly concatenable with order $m_{\varphi}(r_1), m_{\varphi}(r_2)$. By Theorem \ref{coro}, the standard $2$-complex associated to $\mathcal{P}$ has non-positive immersions.

\begin{ej}
Consider the presentation $\mathcal{P} = \langle a, b, c \mid r_1, r_2 \rangle$ with relators  
\[
r_1 = a^{-1}c^{-1}aaab^{-1}b^{-1}c^{-1}ab, \quad r_2 = c^{-1}b^{-1}cb^{-1}caba^{-1}ba^{-1}.
\]
Let $\varphi : G \to \mathbb{Z}$ be the homomorphism defined by $\varphi(a) = \varphi(b) = \varphi(c) = 1$. It is easy to verify that the multisets of minima are $m_\varphi(r_1) = \{c, a, c, a\}$ and $m_\varphi(r_2) = \{b, c, b, c\}$, and that $\{r_1, r_2\}$ is weakly concatenable with the ordering $m_\varphi(r_2), m_\varphi(r_1)$. By Theorem \ref{coro}, $\mathcal{P}$ has the non-positive immersion property.
\end{ej}

Proposition \ref{weak} and Theorem \ref{coro} can be generalized by replacing the map $G\to \Z$ with a map $\varphi:G\to H$ onto any locally indicable group $H$. In that case, we replace the infinite cyclic cover by the corresponding $H$-covering. Note that the notion of weak concatenability can be naturally extended to any left-orderable group (and, in particular, to any locally indicable group). The no proper power condition used in the proof of Proposition \ref{weak} is still satisfied, since locally indicable groups are conservative (see \cite{ge0, hsc}). As a consequence, we obtain a more general version of Theorem \ref{coro}.

\begin{thm} \label{corogeneral}
	Let $\mathcal{P} = \langle a_1, \ldots, a_n \mid  r_1, \ldots, r_k \rangle$ be a presentation of a group $G$ with cyclically reduced relators and $H_1(G)$ nontrivial free abelian of rank $n-k$. Let $\varphi : G \to H$ be a surjective homomorphism onto a locally indicable group. If $\{r_1, \ldots , r_k\}$ is weakly concatenable with respect to $\varphi$, then $\mathcal{K}_{\mathcal{P}}$ has the non-positive immersion property.
\end{thm}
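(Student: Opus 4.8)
The plan is to run the same two-step argument that produces Theorem~\ref{coro}: first show that weak concatenability with respect to $\varphi\colon G\to H$ forces $\mathcal{K}_{\mathcal{P}}$ to be weakly $H$-slim (the analogue of Proposition~\ref{weak}), and then apply Theorem~\ref{principal}. Throughout, the infinite cyclic cover is replaced by the $H$-covering $\hat X\to X$ determined by $\varphi$. Since $H$ is locally indicable it is left-orderable; I fix a left-invariant total order on $H$ once and for all, and by replacing generators with their inverses where necessary I arrange $\varphi(a)\ge 1_H$ for every generator $a$ (in a left-ordered group $g<1_H$ iff $g^{-1}>1_H$, so this is always possible). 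The weight of a word is then its image in $H$, the multiset $m_\varphi(r)$ of minima is defined as before but with the minimum of the prefix weights taken in the fixed order of $H$, and weak concatenability reads verbatim.

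First I would transport the combinatorial data of Proposition~\ref{weak} to $\hat X$. Write $\hat a_{h,i}$ for the lift of $a_i$ starting at the vertex $h\in H$, and let $R_{h,i}$ be the lift of the $2$-cell $R_i$ whose minimal $0$-cell is $h$. The vertices traversed by the lifted relator are $h\cdot\varphi(p)$ as $p$ ranges over the prefixes, and left-invariance gives $h\varphi(p)<h\varphi(p')\iff\varphi(p)<\varphi(p')$, so the minimal vertex, and hence $R_{h,i}$ and its minimal edge, are well defined. I order the edges lexicographically on the pairs $(h,i)$, using the fixed order on $H$ in the first coordinate and the reverse of the order on $\{1,\dots,n\}$ in the second; left-invariance together with $h'\cdot\hat a_{h,i}=\hat a_{h'h,i}$ makes this order $H$-invariant. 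After reindexing the relators exactly as in Proposition~\ref{weak} so that $\min(R_{h,i})=\hat a_{h,i}$, I define the subcomplexes $\hat X_{h,i}$ verbatim and check that each pair $(\hat X_{h,i},\hat X_{h,i-1})$ is an $(R_{h,i},\hat a_{h,i})$-elementary reduction (and a $1$-cell reduction when $1\le i\le n-k$). The requirement that the numbers of positive and negative copies of some new generator differ is a purely combinatorial statement about net traversal multiplicity, independent of whether weights live in $\mathbb{Z}$ or in $H$, so it still guarantees that $\hat a_{h,i}$ is properly involved in $\partial_pR_{h,i}$ and therefore that $\partial_pR_{h,i}$ is not homotopic into $\hat X_{h,i-1}$.

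The step I expect to demand the most care is the verification that every reduction is \emph{simple}, i.e.\ without proper powers. From the hypotheses, $\mathcal{K}_{\mathcal{P}}$ has the homology of a wedge of circles (the rank condition on $H_1(G)$ together with $\chi(\mathcal{K}_{\mathcal{P}})=1-n+k$ forces $H_2=0$). In the $\mathbb{Z}$-case this, combined with the conservativity of $\mathbb{Z}$, ruled out proper powers. The replacement is the fact that locally indicable groups are conservative \cite{ge0,hsc}: applied to the deck group $H$, it rules out the lifted relators being proper powers exactly as conservativity of $\mathbb{Z}$ did in Proposition~\ref{weak}. I would phrase this step so that it appeals only to left-invariance of the order and to conservativity of $H$, not to any arithmetic of $\mathbb{Z}$. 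This completes the analogue of Proposition~\ref{weak}: $\mathcal{K}_{\mathcal{P}}$ is weakly $H$-slim.

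Finally I would invoke Theorem~\ref{principal}. Because $H$ is locally indicable, Remark~\ref{skipleftord} lets me dispense with a separate hypothesis of left-orderability on $\pi_1X$: Proposition~\ref{esreducible} shows $\ker\varphi=\pi_1\hat X$ is locally indicable, and the extension $1\to\ker\varphi\to\pi_1X\to H\to1$ of locally indicable groups is again locally indicable, hence left-orderable. Theorem~\ref{principal} then promotes weak $H$-slimness to slimness of $X$, whence $\mathcal{K}_{\mathcal{P}}$ has the non-positive immersion property. The only genuinely new ingredient beyond the cyclic case is the passage from conservativity of $\mathbb{Z}$ to conservativity of an arbitrary locally indicable group, so I would make sure that citation does the work required of it.
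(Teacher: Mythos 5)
Your proposal is correct and takes essentially the same route as the paper: the paper proves Theorem \ref{corogeneral} exactly by rerunning the construction of Proposition \ref{weak} over the $H$-covering (with weights and multisets of minima read via a left order on the locally indicable group $H$), using conservativity of locally indicable groups \cite{ge0, hsc} in place of conservativity of $\mathbb{Z}$ for the no-proper-power condition, and then applying Theorem \ref{principal} together with Remark \ref{skipleftord}. Your identification of the single genuinely new ingredient (conservativity of $H$) matches the paper's own emphasis.
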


The following example shows a presentation $\mathcal{P}$ for which the weak concatenability condition is not satisfied for any surjective homomorphism $\varphi: G(\mathcal{P}) \to \mathbb{Z}$, but Theorem \ref{corogeneral} works with other locally indicable group $H$.

\begin{ej}
	Let $\mathcal{P} = \langle x, y, z  \mid  x^{-1}z^4xz^{-3}yzy^{-1}z^{-1}y^{-1} , y^{-1}x^{-1}y^{-1}z^{-1}xzyzxz^{-1}\rangle$ be a presentation of a group $G$ and let $\langle x, y, z  \mid  xzx^{-1}z^{-1}, xyxy^{-1}x^{-1}y^{-1}, yzyz^{-1}y^{-1}z^{-1}\rangle$ be the standard presentation of the Braid group $B_4$ on four strings. Consider the opposite of the Dehornoy ordering on $B_4$ \cite{cr}. Let $\varphi:G\to B_4$ be the map which sends each generator of $\pe$ to the generator of $B_4$ named with the same letter. The multisets of minima are $m_{\varphi}(r_1) = \{y,z\}$ and $m_{\varphi}(r_2) = \{x,z\}$. Note that $\{r_1,r_2\}$ is weakly concatenable. Since $H_1(G) = \mathbb{Z}$ and $B_4$ is locally indicable, $\k_{\pe}$ has the non-positive immersion property. However, there is no surjective map $\psi : G \to \mathbb{Z}$ making $\{r_1,r_2\}$ weakly concatenable. Indeed, any map $\psi : G \to \mathbb{Z}$ satisfies $\psi(x)=\psi(y)=\psi(z)$, and, if it is surjective, it must  send every generator of $\mathcal{P}$ to $1$ or to $-1$. An easy computation shows that, in both cases, $m_{\psi}(r_1) = \{x,z\}$ and $m_{\psi}(r_2) = \{x,z\}.$
\end{ej}

\section{Adian presentations and LOGs}\label{secadian}

We apply the results of the previous section to derive an extension of a result of Wise on Adian presentations \cite[Section 11.2]{wi} and a stronger formulation of  a well known result of Howie  on labelled oriented trees \cite[Theorem 10.1]{h2}.

 An Adian presentation is a group presentation of the form $$\pe=\langle a_1,\ldots,a_n \ | u_1=v_1,u_2=v_2,\ldots,u_k=v_k\rangle$$ where $u_i$ and $v_i$ are nontrivial positive words on $A=\{a_1,\ldots,a_n\}$. To each Adian presentation $\pe$ one can associate two graphs: $T(\pe)$ and $I(\pe)$. The vertex set of both graphs is the set of generators $A$. The edges of the graph  $T(\pe)$ (resp. $I(\pe)$) connect the first (resp. last) letter of $u_i$ to the first (resp. last) letter of $v_i$. Gersten proved that if the presentation is cycle-free (i.e. if both graphs are forests) then $\kp$ is diagrammatically reducible and, in particular, aspherical \cite[Proposition 4.12]{ge}. Moreover, when $\ell(u_i)=\ell(v_i)$ for every $i$, if either  $T(\pe)$ or $I(\pe)$ has no cycles then $\kp$ is diagrammatically reducible (and hence aspherical) \cite[Proposition 4.15]{ge}. More recently, Wise showed that the $2$-complexes associated to cycle-free Adian presentations have non-positive sectional curvature, in particular they have the non-positive immersion property \cite[Theorem 11.4]{wi}.  As an application of Theorem \ref{coro}, we obtain the following extension of Wise's result \cite[Theorem 11.4]{wi} for Adian presentations with $\ell(u_i)=\ell(v_i)$ for every $i$.

\begin{thm}\label{adian}
Let $\pe=\langle a_1,\ldots,a_n\ |\ u_1=v_1,\ldots,u_k=v_k\rangle$ be an Adian presentation with $H_1(G(\pe))$ nontrivial free abelian of rank $n-k$, and such that $\ell(u_i)=\ell(v_i)$ for every $i$. If either $I(\pe)$ or $T(\pe)$ has no cycles, then $\k_{\mathcal{P}}$ has the non-positive immersion property.
\end{thm}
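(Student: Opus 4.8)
The plan is to deduce this from Theorem \ref{coro} by choosing an appropriate homomorphism to $\Z$ and translating the weak concatenability condition into a statement about the graphs $T(\pe)$ and $I(\pe)$. I would treat the case where $T(\pe)$ has no cycles first, and then reduce the case of $I(\pe)$ to it by inverting all of the generators.

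First I would take $\varphi:G(\pe)\to\Z$ to be the homomorphism sending every generator to $1$. This is well defined because $\varphi(u_iv_i^{-1})=\ell(u_i)-\ell(v_i)=0$ for each $i$, and it is surjective. Passing to cyclically reduced relators if necessary (in the $T(\pe)$-case, having no cycles means $\mathrm{first}(u_i)\neq\mathrm{first}(v_i)$, so no common prefix can occur, and only common suffixes of $u_i$ and $v_i$ are removed; this is a free equality and leaves the first letters, hence $T(\pe)$, unchanged), the key computation is to identify $m_\varphi(u_iv_i^{-1})$. Since $u_i$ and $v_i$ are positive words of equal length $\ell_i$, the sequence of weights along $u_iv_i^{-1}$ climbs $1,2,\dots,\ell_i$ through $u_i$ and descends $\ell_i-1,\dots,1,0$ through $v_i^{-1}$, so its unique global minimum is $0$, attained only at the final vertex. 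Reading off the incoming and outgoing letters there, one gets exactly one negative copy, of the first letter of $v_i$, and one positive copy, of the first letter of $u_i$. Thus $m_\varphi(r_i)=\{\,\mathrm{first}(u_i),\mathrm{first}(v_i)\,\}$, which is precisely the pair of endpoints of the edge of $T(\pe)$ associated with the $i$-th relator.

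With this identification, weak concatenability becomes a purely graph-theoretic condition. Since each endpoint occurs in $m_\varphi(r_i)$ as a single copy (one positive, one negative), the parity requirement in Definition \ref{weakconca} is automatic for either endpoint, and the only constraint is that, when the edges are listed in the chosen order, each new edge has an endpoint incident to no earlier one. I would then prove the elementary lemma that a finite graph admits such an ordering of its edges if and only if it is a forest: if $T(\pe)$ is a forest one orders the edges by repeatedly stripping leaf edges (the leaf supplying the required private endpoint), while a cycle forces its last edge in any ordering to have both endpoints already used. Hence, when $T(\pe)$ has no cycles, $\{r_1,\dots,r_k\}$ is weakly concatenable with respect to $\varphi$, and Theorem \ref{coro} yields that $\k_\pe$ has the non-positive immersion property.

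For the case in which $I(\pe)$ has no cycles I would reduce to the previous one by applying the automorphism of the free group that inverts every generator. This leaves the group (up to isomorphism) and the complex $\k_\pe$ unchanged, replaces $u_iv_i^{-1}$ by $\iota(u_i)\iota(v_i)^{-1}$, and moves the relevant extreme of the weight function to the junction between the two blocks, so that now $m_\varphi$ records the last letters of $u_i$ and $v_i$; in other words, it interchanges the roles of $T(\pe)$ and $I(\pe)$. Applying the $T$-case to this presentation gives the result. The main obstacle I anticipate is not any single hard step but the careful bookkeeping: verifying that the computation of $m_\varphi(r_i)$ matches exactly the combinatorics of $T(\pe)$ (and, after inversion, of $I(\pe)$), and ensuring the relators may be taken cyclically reduced without disturbing whichever graph is assumed acyclic. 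The forest characterization is the conceptual core but is genuinely elementary.
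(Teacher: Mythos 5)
Your proof is correct and is essentially the paper's own argument written out in full: the paper likewise takes $\varphi$ to send every generator to $1$, observes that acyclicity of one of the two graphs yields weak concatenability and invokes Theorem \ref{coro}, and handles the other graph ``by replacing minima with maxima''---which is exactly what your generator-inversion automorphism implements, your version having the advantage of reducing literally to the stated Theorem \ref{coro} and of recording the cyclic-reduction step that the paper leaves implicit. The one discrepancy is in the bookkeeping: your computation pairs the minima multisets with the first-letter graph $T(\pe)$ and the inverted (maxima) case with the last-letter graph $I(\pe)$, whereas the paper's proof asserts the opposite pairing; under the paper's stated definitions of the two graphs your pairing is the correct one, and the mismatch is immaterial since the theorem's hypothesis is symmetric in $I(\pe)$ and $T(\pe)$.
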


\begin{proof}
Under the hypotheses on the Adian presentation, the map $\varphi:G(\pe)\to \Z$ which sends each generator to $1$ is a well defined surjective homomorphism. If $I(\pe)$ has no cycles, the weak concatenability condition is satisfied, and the result follows from Theorem \ref{coro}. If $T(\pe)$ has no cycles, the weak concatenability condition is satisfied by replacing minima with maxima.
\end{proof}

\begin{ej} Standard Artin presentations are particular examples of Adian presentations where the condition $\ell(u_i)=\ell(v_i)$ is satisfied. Recall that a presentation graph is  a finite simple graph $\Gamma$ with vertex set $S$ and the edges connecting vertices $s$ and $t$ are labelled by integers $m_{st}\geq 2$. If there are no edges between them, we set $m_{st} = \infty$. The Artin group defined by $\Gamma$ is the group $A_\Gamma$ with standard presentation:  
\[
P_\Gamma = \langle S \mid 
\underbrace{sts\cdots}_{m_{st} \text{ letters}} = \underbrace{tst\cdots}_{m_{st} \text{ letters}} \, \forall s,t \text{ such that } m_{st} \neq \infty \rangle.
\]

It is known that, when the defining graph $\Gamma$ is a forest, $A_\Gamma$ is a $3$-manifold group and, in particular, it is coherent \cite{go}. In almost all the other cases, $A_\Gamma$ is not coherent (see \cite[Proposition 9.24]{wi1}). In view of conjecture \cite[Conjecture 12.11]{wi1}, we cannot expect the non-positive immersion property of their associated complexes. If $\Gamma$ is a forest and all the labels  $m_{st}$ are odd, one can deduce the non-positive immersion property as follows.
Observe that, in that case, $H_1(A_\Gamma)$ is free abelian of rank $n-k$, where $n$ is the number of vertices and  $k$ is the number of edges of $\Gamma$. Moreover, the graph $I(\pe)$ coincides with $\Gamma$, and the non-immersion property follows from  Theorem \ref{adian}. Note, however, that this can be deduced also from \cite[Theorem 11.4]{wi} since, for Artin presentations, both graphs are equal and, in this case, $I(\pe)$ and $T(\pe)$ have no cycles.
\end{ej}

 Our methods are more effective when these graphs are different (and one of them has no cycles). This is the case of LOT presentations, which we study next.

\subsection*{Applications to LOTs}

LOG presentations are other particular cases of Adian presentations satisfying  $\ell(u_i)=\ell(v_i)$ for each $i$. A labelled oriented graph (LOG) $\Gamma$ consists of two sets $V(\Gamma)$ and $E(\Gamma)$ of vertices and edges, and three functions $i, t, \lambda : E \to V$ that map each edge to its initial vertex, terminal vertex and label respectively. A LOG is called a LOT (labelled oriented tree), or more generally a LOF (labelled oriented forest) if the underlying graph is a tree (resp. a forest). The standard presentation associated to a LOG $\Gamma$ is the following:
$$P(\Gamma) = \langle V(\Gamma)  \mid  \{t(e)^{-1}\lambda(e)^{-1}i(e)\lambda(e) : e \in E(\Gamma)\}\rangle.$$
Note that $P(\Gamma)$ has deficiency $|V(\Gamma)|-|E(\Gamma)|$ and, when the underlying graph is a forest, the presentation complex has the homology of a wedge of circles. 
We can assume that the LOFs are reduced (see \cite[Section 3]{h2}), which implies that all the relators of $P(\Gamma)$ are cyclically reduced (and then, $\ell(u_i)=\ell(v_i)=2$ for each $i$). The graphs $I(\pe)$ and $T(\pe)$ of Theorem \ref{adian} are denoted respectively $I(\Gamma)$ and $T(\Gamma)$, and they were investigated by Howie in \cite{h2}. Howie proved that if $I(\Gamma)$ or $T(\Gamma)$ has no cycles then the group $G(\Gamma)$ presented by $P(\Gamma)$ is locally indicable \cite{h2}. From Theorem \ref{adian} we deduce the following stronger result. 
\begin{corollary}\label{corolot}
    Let $\Gamma$ be a reduced LOF. If $I(\Gamma)$ or $T(\Gamma)$ has no cycles, then the associated $2$-complex $\k_{\mathcal{P}_\Gamma}$ has the non-positive immersion property. 
\end{corollary}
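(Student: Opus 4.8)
The plan is to deduce Corollary \ref{corolot} directly from Theorem \ref{adian} by verifying that a reduced LOF $\Gamma$ satisfies every hypothesis of that theorem. First I would observe that $P(\Gamma)$ is an Adian presentation in which each relator $t(e)^{-1}\lambda(e)^{-1}i(e)\lambda(e)$ comes from the equation $\lambda(e)\,i(e) = t(e)\,\lambda(e)$, so that $u_e = \lambda(e)\,i(e)$ and $v_e = t(e)\,\lambda(e)$ are positive words with $\ell(u_e)=\ell(v_e)=2$. Thus the length condition $\ell(u_i)=\ell(v_i)$ required by Theorem \ref{adian} holds automatically. The reduced hypothesis on $\Gamma$ guarantees, as noted in the paragraph preceding the statement, that the relators are cyclically reduced.

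Next I would check the homological hypothesis $H_1(G(\Gamma))$ nontrivial free abelian of rank $n-k$. Since the underlying graph of a LOF is a forest, the excerpt already records that $P(\Gamma)$ has deficiency $|V(\Gamma)|-|E(\Gamma)|$ and that the presentation complex $\k_{\mathcal{P}_\Gamma}$ has the homology of a wedge of circles; concretely, abelianizing $P(\Gamma)$ sends each relator to the trivial element of the free abelian group on the vertices, so $H_1(G(\Gamma)) \cong \mathbb{Z}^{|V(\Gamma)|-|E(\Gamma)|}$, which is free abelian of the required rank $n-k$ (and nontrivial because a forest has fewer edges than vertices on each component). The key point here is simply that the abelianization kills each relator, so no interaction between the edges can alter the rank.

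The final and main step is to identify the graphs $I(\Gamma)$ and $T(\Gamma)$ used by Howie with the graphs $I(\pe)$ and $T(\pe)$ appearing in Theorem \ref{adian}. Reading off $u_e = \lambda(e)\,i(e)$ and $v_e = t(e)\,\lambda(e)$, the first letters are $\lambda(e)$ and $t(e)$ while the last letters are $i(e)$ and $\lambda(e)$; this is precisely the incidence data Howie uses to build $T(\Gamma)$ and $I(\Gamma)$, so the two pairs of graphs coincide (after the relabelling that makes $P(\Gamma)$ fit the Adian template). Once this identification is in place, the hypothesis that $I(\Gamma)$ or $T(\Gamma)$ has no cycles translates verbatim into the hypothesis that $I(\pe)$ or $T(\pe)$ has no cycles, and Theorem \ref{adian} yields the non-positive immersion property for $\k_{\mathcal{P}_\Gamma}$.

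I expect the main obstacle to be bookkeeping rather than mathematical depth: one must be careful about the convention distinguishing the first-letter graph from the last-letter graph (and which of these Howie calls $I$ versus $T$), and about the sign conventions in passing from the cyclic word $t(e)^{-1}\lambda(e)^{-1}i(e)\lambda(e)$ to the positive-word equation $u_e=v_e$. A cleaner presentation might simply read the first and last letters directly off the cyclically reduced relator rather than rewriting it in Adian form, but the substance is unchanged, and no genuinely new argument beyond Theorem \ref{adian} is needed.
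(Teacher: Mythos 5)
Your route is the paper's own: Corollary \ref{corolot} is obtained by feeding the reduced LOF presentation into Theorem \ref{adian}, and the paper's proof consists precisely of the observations you list (relators of length $2+2$, cyclic reducedness from the reducedness of $\Gamma$, the homology condition, and the identification of the graphs). However, one of your verifications is wrong as written. Abelianizing $P(\Gamma)$ does \emph{not} send the relators to the trivial element: the relator $t(e)^{-1}\lambda(e)^{-1}i(e)\lambda(e)$ abelianizes to $i(e)-t(e)$, which is nonzero whenever $i(e)\neq t(e)$. Note that your claim is also internally inconsistent: if every relator died under abelianization, then $H_1(G(\Gamma))$ would be free abelian of rank $|V(\Gamma)|$, not of the rank $|V(\Gamma)|-|E(\Gamma)|$ you assert (and need). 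The correct computation is that $H_1(G(\Gamma))$ is the quotient of $\mathbb{Z}^{V(\Gamma)}$ by the relations $i(e)=t(e)$, one for each edge $e$; these identifications run along the edges of the underlying forest, so the quotient is free abelian on the set of connected components of $\Gamma$, which has rank exactly $|V(\Gamma)|-|E(\Gamma)|\geq 1$. It is the forest hypothesis that makes these relations independent, so that the rank drops by exactly $|E(\Gamma)|$; for a general LOG with cycles in the underlying graph this argument (and the conclusion) can fail.

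The second issue is the harmless one you flagged yourself: the relator $t(e)^{-1}\lambda(e)^{-1}i(e)\lambda(e)$ encodes the equation $i(e)\lambda(e)=\lambda(e)t(e)$, not $\lambda(e)i(e)=t(e)\lambda(e)$, so $u_e=i(e)\lambda(e)$ and $v_e=\lambda(e)t(e)$. Hence the first-letter graph $T(\mathcal{P})$ joins $i(e)$ to $\lambda(e)$ (Howie's initial graph) and the last-letter graph $I(\mathcal{P})$ joins $\lambda(e)$ to $t(e)$ (Howie's terminal graph); your reading swaps the two. Since the hypothesis of Theorem \ref{adian} is the symmetric disjunction ``$I$ or $T$ has no cycles,'' this swap does not affect the conclusion. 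With the abelianization step repaired, your argument coincides with the paper's.
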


\end{document}